\newtheorem{Theorem}{Theorem}[section]
\newtheorem{Lemma}{Lemma}[section]
\newtheorem{Remark}{Remark}[section]
\newtheorem{Definition}{Definition}[section]
\begin{document}

\title{\bf On a class of logarithmic Schr\"odinger equations via a new perturbation method}
\author{
	{Chen Huang}\thanks{Email: chenhuangmath111@163.com}\\
	\small School of Mathematics, University of Shanghai for Science and Technology, Shanghai, 200093, P.R. China\\
	\small Yunnan Key Laboratory of Modern Analytical Mathematics and Applications,  Kunming, 650500, P. R. China.\\
	{Zhipeng Yang}\thanks{Corresponding author: yangzhipeng326@163.com}\\
	\small Department\, of \,Mathematics, Yunnan\, Normal\, University, Kunming, 650500, P. R. China.\\
	\small Yunnan Key Laboratory of Modern Analytical Mathematics and Applications, Kunming, 650500, P. R. China.\\
}

\date{}
\maketitle

\begin{abstract}
\noindent In this paper, we consider the following logarithmic Schr\"odinger equation
	\[
	-\Delta u + V(x)u = u \log u^{2},\quad x\in\mathbb{R}^{N}.
	\]
	Assuming that \(V\in C(\mathbb{R}^{N},\mathbb R)\), \(V\) is bounded away from zero, and \(V(x)\to+\infty\) as \(|x|\to\infty\), we develop a new perturbative variational approach to overcome the lack of \(C^{1}\)-smoothness of the associated functional and prove the existence and multiplicity of nontrivial weak solutions. 
\end{abstract}

\ \ \ \ {\bf Keywords:} Logarithmic Schr\"{o}dinger equations; Perturbation methods; Minimax methods.
\par
\ \ \ \ {\bf 2010 AMS Subject Classification:} 35J20; 35B20; 49J35.

\section{Introduction}
The logarithmic Schr\"{o}dinger equation is defined by
\[
i \frac{\partial \Psi}{\partial t} = -\Delta\Psi + (V(x) + E)\Psi - \Psi \log \Psi^2, \quad x \in \mathbb{R}^N, \tag{NLSE}
\]
where \(N \geq 3\), and \(-\Delta\) denotes the Laplacian operator. Here, \( V \) represents a continuous potential function, a model that has received comparatively limited attention in the literature. A fundamental objective involves the investigation of standing wave solutions to (NLSE) of the form \(\Psi(x,t) = \exp(-iEt)u(x)\), where \( u \) satisfies the elliptic equation:
\begin{equation}\label{eq1.1}
  -\Delta u + V(x)u = u \log u^2, \quad x \in \mathbb{R}^N.  
\end{equation}

The logarithmic Schr\"{o}dinger equation exhibits broad applicability across diverse fields including quantum mechanics, quantum optics, nuclear physics, transport and diffusion phenomena, open quantum systems, effective quantum gravity, superfluid theory, and Bose-Einstein condensation. By using the variational method, the existence, multiplicity, and concentration behavior of solutions to equation \eqref{eq1.1} have been extensively studied; see references \cite{MR4048330,MR4097474,MR719365,MR3195154,MR2570526,MR3451965,MR3951962,MR3385171,MR3894545,MR4066104}. 

The natural candidate for the energy functional associated with equation \eqref{eq1.1} takes the form:
\[
I(u) = \frac{1}{2} \int_{\mathbb{R}^N} \left( |\nabla u|^2 + (V(x) + 1)u^2 \right) \text{d}x - \frac{1}{2} \int_{\mathbb{R}^N} u^2 \log u^2  \text{d}x. 
\]
It is readily verified that critical points of \(I\) correspond to solutions of \eqref{eq1.1}. However, this functional is not well-defined on the space \( H^1(\mathbb{R}^N) \), since there exist functions \( u \in H^1(\mathbb{R}^N) \) for which the integral \( \int_{\mathbb{R}^N} u^2 \log u^2 \text{d}x \) diverges to \( -\infty \). This is exemplified by the function:
\[
u(x) = 
\begin{cases} 
(|x|^{N/2} \log |x|)^{-1}, & |x| \geq 3, \\
0, & |x| \leq 2.
\end{cases}
\]

To address this technical difficulty, various approaches have been developed in the literature. In \cite{MR719365}, Cazenave formulated the problem within an appropriately constructed Banach space \( W \), endowed with a Luxembourg-type norm that ensures the functional \( I: W \to \mathbb{R} \) is well-defined and of class \( C^1 \). In \cite{MR2570526}, Guerrero et al. established the existence of nontrivial solutions for the three-dimensional logarithmic Schr\"{o}dinger equation by penalizing the logarithmic nonlinearity near the origin. In \cite{MR3195154}, for constant potentials \(V(x)\), d'Avenia et al. applied nonsmooth critical point theory to a lower semicontinuous functional to prove the existence of infinitely many radial solutions to equation \eqref{eq1.1}. More recently, Squassina and Szulkin \cite{MR3385171} adopted an alternative strategy by working directly in the unconstrained space \( H^1(\mathbb{R}^N) \). They decomposed the functional \( I \) into the sum of a \( C^1 \) functional and a convex lower semicontinuous functional, thereby establishing the existence of infinitely many geometrically distinct solutions for a class of logarithmic Schr\"{o}dinger equations with periodic potentials. Recently, in \cite{MR3951962}, Shuai employed a constrained variational method to prove the existence of positive and sign-changing solutions for equation \eqref{eq1.1} under various potential conditions.

In a significant contribution \cite{MR3894545}, Wang and Zhang obtained a profound connection between the power-law nonlinearity equation and the logarithmic Schr\"{o}dinger equation. Inspired by this work, Zhang and Wang \cite{MR4066104} proposed a global perturbation of the variational functional with power nonlinearity
\[
I_{q}(u) = \frac{1}{2} \int_{\mathbb{R}^N} \left( |\nabla u|^2 + V(x)u^2 \right) \text{d}x - \int_{\mathbb{R}^N}\left(\frac{2}{q(q-2)}|u|^q-\frac{1}{q-2}u^2\right)  \text{d}x. 
\]
As a result, a family of $C^2$ functionals $I_{q}(u)$ in $H^1(\mathbb{R}^N)$ were considered. The critical points of $I_{q}$ with uniform bounded functional values converge to
some critical points of $I$ as $q\to2^{+}$.

In contrast to all the variational methods mentioned above, this paper presents a novel perturbation approach for the logarithmic Schr\"{o}dinger equation. Indeed, when we consider equation \eqref{eq1.1} by using the classical critical point theory we encounter the difficulties caused by the lack of an appropriate working space. For smoothness one would need to work in a space smaller than $H^{1}(\mathbb{R}^N)$ to control the logarithmic term, for example, we may choose $L^{p}(\mathbb{R}^N)\cap H^{1}(\mathbb{R}^N)$ with $p\in(\max\{\frac{2N-4}{N+2},1\},2)$ as the working space for \eqref{eq1.1}. Due to the following algebraic inequality, for \(t\in\mathbb{R}\),
\[
|t^{2}\log t^2|\leq C_{\varepsilon}(|t|^{2-\varepsilon}+|t|^{2+\varepsilon}),~\text{with}~\varepsilon\in(0,2-p),
\]
where $C_\varepsilon$ is a positive constant depending on $\varepsilon$. Combining the Sobolev embedding theorem, the functional corresponding to the logarithmic term is known to exhibit smoothness in $L^{p}(\mathbb{R}^N)\cap H^{1}(\mathbb{R}^N)$. But it seems difficult to obtain bounded Palais--Smale sequences in this smaller working space. In order to overcome the difficulties, when considering the logarithmic Schr\"{o}dinger equation in the space $L^{p}(\mathbb{R}^N)\cap H^{1}(\mathbb{R}^N)$, we introduce a $L^p$-perturbation to equation \eqref{eq1.1}. We then consider a perturbed functional
\begin{equation*}
\begin{aligned}
I_{\lambda}(u)&=\frac{\lambda}{p}\int_{\mathbb{R}^N}|u|^p\text{d}x+\frac{1}{2}\int_{\mathbb{R}^N}\left(|\nabla u|^{2}+(V(x)+1)u^{2}\right)\text{d}x-\frac{1}{2}\int_{\mathbb{R}^N}u^{2}\log u^{2}\text{d}x.
\end{aligned}
\end{equation*}
The idea is to obtain existence of critical points of $I_{\lambda}$ for $\lambda>0$ small and to establish suitable estimates for the critical points as $\lambda\to0^{+}$ so that we may pass to the limit to get solutions of the original problem.

By using our new perturbation method, we study the logarithmic Schr\"odinger equation \eqref{eq1.1} with \(V\) satisfying:
\begin{itemize}
	\item[(V)] \(V\in C(\mathbb{R}^{N},\mathbb{R})\), \(\displaystyle \inf_{x\in\mathbb{R}^{N}} V(x)\ge V_{0}>0\) and \(V(x)\to +\infty\) as \(|x|\to\infty\).
\end{itemize}

\begin{Remark}\label{Rem1.1}
	If \(v\) is a weak solution of 
	\[
	-\Delta v+\big(V(x)-\log \lambda^{2}\big)v \;=\; v\,\log v^{2}
	\quad x\in\mathbb{R}^{N},
	\]
	for some \(\lambda\neq 0\), then \(u:=\lambda v\) is a weak solution of \eqref{eq1.1}. 
	Indeed,
	\[
	-\Delta(\lambda v)+V(x)\,\lambda v 
	= \lambda\big(-\Delta v+V(x)v\big)
	= \lambda\,v\big(\log v^{2}+\log \lambda^{2}\big)
	= \lambda v \log\!\big(\lambda^{2}v^{2}\big).
	\]
	In particular, if \(V\) is bounded from below, choosing \(|\lambda|\) small enough so that \(\log \lambda^{2}<\inf\limits_{x\in\mathbb{R}^{N}}V(x)\) yields 
	\(V(x)-\log \lambda^{2}\ge V_{0}'>0\) for all \(x\). 
	Thus, by this scaling we may, without loss of generality, assume \(\inf\limits_{x\in\mathbb{R}^{N}}V(x)>0\).
\end{Remark}

Our main results are as follows.

\begin{Theorem}\label{thm1.1}
	Let \(N\ge 3\). Assume (V) holds. Then problem \eqref{eq1.1} admits a nontrivial weak solution $u_0$.
\end{Theorem}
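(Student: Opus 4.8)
The plan is to implement the perturbation scheme announced in the introduction. First I would fix a small parameter $\lambda>0$ and work in the reflexive Banach space $X_\lambda := (W^{1,p}(\mathbb{R}^N)\cap H^1(\mathbb{R}^N))$ equipped with the natural norm, but weighted with the potential: since $V(x)\to+\infty$, the subspace $H$ of $H^1$ with $\int (|\nabla u|^2 + V(x)u^2)\,\mathrm{d}x<\infty$ embeds compactly into $L^q(\mathbb{R}^N)$ for $q\in[2,2^*)$ (this is the classical Rabinowitz-type compactness lemma, using that $\{V\le M\}$ has finite measure for every $M$). On $X_\lambda$ the functional $I_\lambda$ is of class $C^1$: the $p$-term and the quadratic term are smooth, and the logarithmic term is smooth there because of the algebraic inequality $|t^2\log t^2|\le C_\varepsilon(|t|^{2-\varepsilon}+|t|^{2+\varepsilon})$ quoted in the excerpt, with $\varepsilon\in(0,2-p)$, combined with the embeddings $X_\lambda\hookrightarrow L^{2-\varepsilon}\cap L^{2+\varepsilon}$. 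So for each fixed $\lambda$ the classical mountain pass theorem applies.

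Next I would verify the mountain-pass geometry for $I_\lambda$, uniformly in $\lambda\in(0,1]$. Near the origin, the term $-\frac12\int u^2\log u^2$ is not sign-definite, but using $u^2\log u^2 \le \delta u^2 + C_\delta |u|^{2+\varepsilon}$ (valid with the constant depending only on $\delta$) and absorbing $\delta\|u\|_2^2$ into the coercive quadratic part $\frac12\int(|\nabla u|^2+(V+1)u^2)$, one gets $I_\lambda(u)\ge \alpha>0$ on a sphere $\|u\|=\rho$ with $\alpha,\rho$ independent of $\lambda$. For the far end, fix any $w\in C_c^\infty$ with $\int w^2\log w^2>0$ (rescaling a bump); then $I_\lambda(tw)\to-\infty$ as $t\to\infty$ because the logarithmic term grows like $t^2\log t^2$ while the $p$-term only grows like $\lambda t^p$ with $p<2$, so for $\lambda\le1$ there is a $T$ independent of $\lambda$ with $I_\lambda(Tw)<0$. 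This yields mountain-pass values $c_\lambda\in[\alpha, \max_{t}I_1(tw)=:M]$, bounded above and below uniformly in $\lambda$. The Palais–Smale condition for fixed $\lambda$ follows from the compact embedding of $H$ (the $p$-term gives coercivity-type control and weak lower semicontinuity; boundedness of PS sequences at level $c_\lambda$ comes from the standard computation $I_\lambda(u_n)-\frac12 I_\lambda'(u_n)u_n = \frac{\lambda}{p}(1-\frac p2)\int(|\nabla u_n|^p+|u_n|^p) + \frac12\int u_n^2$, which is $\ge \frac12\|u_n\|_2^2$ and also controls the $W^{1,p}$-norm). So for each $\lambda\in(0,1]$ we obtain $u_\lambda\in X_\lambda$ with $I_\lambda'(u_\lambda)=0$ and $I_\lambda(u_\lambda)=c_\lambda\in[\alpha,M]$.

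The heart of the argument is the passage $\lambda\to0^+$. From $I_\lambda(u_\lambda)\le M$ and $I_\lambda'(u_\lambda)u_\lambda=0$ I would first extract the uniform bound $\|u_\lambda\|_H^2 = \int(|\nabla u_\lambda|^2 + (V+1)u_\lambda^2)\,\mathrm{d}x \le C$ and also $\int u_\lambda^2 \le C$ and $\lambda\int(|\nabla u_\lambda|^p+|u_\lambda|^p)\to 0$ — these come from the same Nehari-type subtraction as above (note $M \ge I_\lambda(u_\lambda) - \frac12 I_\lambda'(u_\lambda)u_\lambda \ge \frac{\lambda}{p}(1-\tfrac p2)\|\nabla u_\lambda\|_p^p + \tfrac12\|u_\lambda\|_2^2 \ge 0$, and separately the Pohozaev-free identity controls the $H^1$ norm once one knows $\int u_\lambda^2\log u_\lambda^2$ is bounded, which in turn follows from the quadratic control of its negative part). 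Then $u_\lambda \rightharpoonup u_0$ in $H$ along a subsequence, and by the compact embedding $u_\lambda\to u_0$ strongly in $L^q$, $q\in[2,2^*)$, hence also $u_\lambda\to u_0$ a.e. Passing to the limit in the weak formulation $\int(\nabla u_\lambda\nabla\varphi + V u_\lambda\varphi) + \lambda\int(|\nabla u_\lambda|^{p-2}\nabla u_\lambda\nabla\varphi + |u_\lambda|^{p-2}u_\lambda\varphi) = \int u_\lambda\varphi\log u_\lambda^2$ for $\varphi\in C_c^\infty$: the $\lambda$-term vanishes since $\lambda\|u_\lambda\|_{W^{1,p}}^{p-1}\le \lambda^{1/p}(\lambda\|u_\lambda\|_{W^{1,p}}^p)^{(p-1)/p}\to0$, the linear terms pass by weak convergence, and the delicate term $\int u_\lambda\varphi\log u_\lambda^2 \to \int u_0\varphi\log u_0^2$ is handled by splitting $\{|u_\lambda|\le1\}$ and $\{|u_\lambda|>1\}$: on the first set $|u_\lambda\log u_\lambda^2|\le C|u_\lambda|^{1-\varepsilon/2}$ is dominated via the $L^{q}$-convergence plus Vitali, on the second $|u_\lambda\log u_\lambda^2|\le C|u_\lambda|^{1+\varepsilon/2}$ similarly. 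Thus $u_0$ is a weak solution of \eqref{eq1.1}. The main obstacle — and the step requiring the most care — is ruling out $u_0\equiv 0$: one must show the mountain-pass level does not leak away, i.e. establish a uniform-in-$\lambda$ lower bound $\|u_\lambda\|_{L^{q_0}}\ge \delta_0>0$ for some $q_0\in(2,2^*)$. This follows because $I_\lambda'(u_\lambda)u_\lambda=0$ forces $\int u_\lambda^2\log u_\lambda^2 \ge \|u_\lambda\|_H^2 \ge \text{(something)} > 0$ once $\|u_\lambda\|\ge\rho$ — and $\|u_\lambda\|\ge\rho$ is guaranteed by $I_\lambda(u_\lambda)=c_\lambda\ge\alpha>0$ together with the geometry estimate — and then the positive part $\int_{\{|u_\lambda|>1\}} u_\lambda^2\log u_\lambda^2 \le C\|u_\lambda\|_{q_0}^{q_0}$ forces $\|u_\lambda\|_{q_0}$ away from zero; strong $L^{q_0}$-convergence then transmits this to $u_0$, so $u_0\neq0$.
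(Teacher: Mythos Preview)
Your overall strategy coincides with the paper's: perturb by a small $p$-Laplacian, apply the Mountain Pass Theorem to the $C^{1}$ functional $I_{\lambda}$ on $X=W^{1,p}\cap H^{1}_{V}$, obtain critical points $u_{\lambda}$ with $c_{\lambda}\in[\alpha,M]$ uniformly in $\lambda$, and let $\lambda\to 0^{+}$. The geometry, the PS identity $2I_{\lambda}(u)-\langle I_{\lambda}'(u),u\rangle=\tfrac{(2-p)\lambda}{p}\|u\|_{W^{1,p}}^{p}+\|u\|_{2}^{2}$, and the non-triviality argument are all essentially as in the paper.

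Two points deserve comment. First, your justification of the uniform bound $\|u_{\lambda}\|_{H^{1}_{V}}\le C$ via ``quadratic control of the negative part'' of $u^{2}\log u^{2}$ does not work: the negative part behaves like $|u|^{2-\varepsilon}$, not $|u|^{2}$, so the naive estimate $I_{\lambda}(u_\lambda)\le M$ only gives $\tfrac12\|u_\lambda\|_{H^1_V}^{2}\le M+C\|u_\lambda\|_{H^1_V}^{2+\delta}$, which is useless. The paper closes this with the logarithmic Sobolev inequality, which yields $\int u_\lambda^{2}\log u_\lambda^{2}\le \tfrac12|\nabla u_\lambda|_{2}^{2}+C$ once $|u_\lambda|_{2}$ is bounded (and that comes from the PS identity). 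You should invoke this explicitly.

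Second, your limit step is genuinely simpler than the paper's. You pass to the limit in $\langle I_{\lambda_n}'(u_n),\varphi\rangle=0$ for $\varphi\in C_c^\infty$ directly: the $\lambda$-terms vanish because $\lambda_n\|u_n\|_{W^{1,p}}^{p-1}\le C\lambda_n^{1/p}\to0$, and the logarithmic term converges by Vitali on $\operatorname{supp}\varphi$ thanks to the domination $|t\log t^{2}|\le C(|t|^{1-\varepsilon}+|t|^{1+\varepsilon})$ and local $L^{q}$-compactness. This is correct. The paper instead runs a Moser iteration to obtain $u_{0}\in L^{\infty}$ and then uses truncated exponential test functions $\psi e^{\mp v_n}$ (a device borrowed from the quasilinear Schr\"odinger literature) to derive two opposite inequalities that combine to the weak equation. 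Your route gives the distributional equation more cheaply; the paper's route buys the extra regularity $u_{0}\in L^{\infty}$, which in turn immediately certifies $\int u_{0}^{2}\log u_{0}^{2}\in\mathbb{R}$ as required by the paper's definition of weak solution. If you keep your direct argument you should add one line for this finiteness: since $\int u_n^{2}\log u_n^{2}=\lambda_n\|u_n\|_{W^{1,p}}^{p}+\|u_n\|_{H^1_V}^{2}\in[0,C]$ and $\int(u_n^{2}\log u_n^{2})_{+}\le C\|u_n\|_{2+\delta}^{2+\delta}\le C'$, Fatou gives $\int(u_0^{2}\log u_0^{2})_{-}<\infty$.
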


\begin{Theorem}\label{thm1.2}
	Let \(N\ge 3\). Assume (V) holds. Then problem \eqref{eq1.1} admits infinitely many weak solutions $\{u_{j}\}_{j=1}^{\infty}$ with $I(u_j)\to+\infty$ as $j\to\infty$.
\end{Theorem}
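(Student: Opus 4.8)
The plan is to run a $\mathbb{Z}_2$-equivariant minimax scheme on the perturbed functionals $I_\lambda$, then pass to the limit $\lambda\to 0^+$ along each minimax level. First I would fix the working space $X = W^{1,p}(\mathbb{R}^N)\cap H^1(\mathbb{R}^N)$ (with $p\in(1,2)$) equipped with its natural norm, and record that, by the algebraic inequality $|t^2\log t^2|\le C_\varepsilon(|t|^{2-\varepsilon}+|t|^{2+\varepsilon})$ together with the Sobolev embeddings, the logarithmic term is a $C^1$ functional on $X$; hence $I_\lambda\in C^1(X,\mathbb{R})$ for every $\lambda>0$. The key compactness input is that, under (V), the embedding of $\{u\in H^1(\mathbb{R}^N):\int V(x)u^2<\infty\}$ into $L^q(\mathbb{R}^N)$ is compact for $q$ in the relevant range (the coercivity $V(x)\to+\infty$ kills the loss of mass at infinity); I would use this to verify that $I_\lambda$ satisfies the Palais--Smale condition for each fixed $\lambda>0$. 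Since $I_\lambda$ is even, $I_\lambda(0)=0$, and the geometry is of mountain-pass type in infinitely many directions (the quadratic-plus-$p$-growth part dominates near $0$ on finite-dimensional subspaces while the $-u^2\log u^2$ term makes $I_\lambda$ unbounded below along rescalings, since $\int (t u_k)^2\log(tu_k)^2 = t^2\log t^2\int u_k^2 + t^2\int u_k^2\log u_k^2$ grows like $t^2\log t^2$), the symmetric mountain-pass theorem (Ambrosetti--Rabinowitz / Clark-type for the unbounded case) yields, for each $j\in\mathbb{N}$, a critical value
\[
c_{\lambda,j}=\inf_{A\in\Gamma_j}\ \max_{u\in A} I_\lambda(u),
\]
where $\Gamma_j$ is the usual family of symmetric sets of genus $\ge j$; these satisfy $c_{\lambda,j}\to+\infty$ as $j\to\infty$ for fixed $\lambda$.

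The heart of the argument is the uniform control of these levels and of the corresponding critical points as $\lambda\to 0^+$. For each fixed $j$ I would choose a \emph{$\lambda$-independent} comparison set $A_j\in\Gamma_j$ (e.g.\ a sphere in a fixed finite-dimensional subspace of smooth compactly supported functions, suitably rescaled) and note that on such $A_j$ the term $\frac{\lambda}{p}\int(|\nabla u|^p+|u|^p)$ is bounded by $C_j\lambda\le C_j$; this gives $\limsup_{\lambda\to0^+}c_{\lambda,j}\le M_j<\infty$ for a constant $M_j$ independent of $\lambda$. Combined with a lower bound $c_{\lambda,j}\ge \beta_j>0$ (again $\lambda$-uniform, coming from the mountain-pass geometry of the $\lambda=0$ part of the functional), we obtain for each $j$ a sequence $u_{\lambda,j}$ of critical points of $I_\lambda$ with $\beta_j\le I_\lambda(u_{\lambda,j})\le M_j$. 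The standard Nehari-type manipulation ($I_\lambda(u)-\frac12 I_\lambda'(u)[u]$) then bounds $\int u_{\lambda,j}^2$ and, through it, $\|u_{\lambda,j}\|_{H^1}$ and $\lambda\|u_{\lambda,j}\|_{W^{1,p}}^p$ uniformly in $\lambda$, so that in particular $\lambda\int(|\nabla u_{\lambda,j}|^p+|u_{\lambda,j}|^p)\to 0$.

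Finally I would pass to the limit: up to a subsequence $u_{\lambda,j}\rightharpoonup u_j$ in $H^1$, strongly in $L^q$ (compact embedding from (V)), and a.e.; using the uniform $H^1$ bound, the vanishing of the $p$-Laplacian perturbation, and the weak-continuity/Fatou arguments for the logarithmic term (splitting $\int u^2\log u^2$ into its positive and negative parts, handling the negative part — which is where uniform integrability must be exploited — via the $L^q$ convergence for $q$ slightly below $2$ and the Brezis--Lieb type lemma), one shows that $u_j$ is a weak solution of \eqref{eq1.1} and that $I(u_j)=\lim I_\lambda(u_{\lambda,j})\ge\beta_j$. To guarantee that the $u_j$ are genuinely distinct and that $I(u_j)\to+\infty$, I would either arrange the lower bounds $\beta_j\to\infty$ directly, or use a multiplicity argument showing that for each $n$ one obtains at least $n$ distinct solutions with energies in prescribed windows, then let $n\to\infty$. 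The main obstacle I anticipate is precisely the passage to the limit in the logarithmic term at the \emph{critical} points — ensuring no energy or mass escapes to the region where $u^2\log u^2<0$ and $|u|$ is small, i.e.\ upgrading weak to strong convergence well enough that $\int u_{\lambda,j}^2\log u_{\lambda,j}^2\to\int u_j^2\log u_j^2$ and that the limit equation is satisfied in the weak sense of $H^1$; the coercive potential and the resulting compact embeddings are what make this feasible, but the non-$C^1$ nature of $I$ on $H^1$ means the convergence of $I_\lambda'(u_{\lambda,j})=0$ to a weak-solution identity for $u_j$ must be done by hand on test functions in $C_c^\infty(\mathbb{R}^N)$ rather than by any abstract continuity of derivatives.
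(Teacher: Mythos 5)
Your overall architecture coincides with the paper's: a genus-based symmetric mountain-pass scheme for the perturbed functionals $I_\lambda$ on $X=W^{1,p}(\mathbb{R}^N)\cap H^1_V(\mathbb{R}^N)$, $\lambda$-uniform two-sided bounds on the minimax levels, and a passage to the limit $\lambda\to0^+$ level by level. However, there are two genuine gaps. The first is the step you explicitly leave open --- ``arrange the lower bounds $\beta_j\to\infty$ directly, or use a multiplicity argument'' --- which is precisely the heart of the theorem: the mountain-pass geometry of the $\lambda=0$ part only supplies \emph{one} fixed positive constant $\alpha$, not a divergent sequence. The paper obtains divergence through an intersection property (its Lemma 3.3): every admissible set $B\in\Gamma_j$ must meet $\partial\Sigma_\rho\cap X_{j-1}$, where $X_{j-1}$ is the $H^1_V$-orthogonal complement of a $(j-1)$-dimensional subspace spanned by smooth compactly supported functions; on that intersection one estimates $I_\lambda$ from below using $\theta_j:=\sup\{|u|_{2+\delta}:u\in X_{j-1},\ \|u\|_{H^1_V}=1\}\to0$ (a consequence of the compact embedding induced by (V)), which yields $c_{\lambda,j}\ge \frac{1}{2\theta_j^2}-C=:\alpha_j\to+\infty$ uniformly in $\lambda$. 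Without this codimension mechanism (or an equivalent), you get infinitely many critical points of each $I_\lambda$ but no $\lambda$-uniform divergent lower bound, hence no way to conclude that the limiting solutions are pairwise distinct or that $I(u_j)\to+\infty$.

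Second, your claim that the uniform bound $\lambda\|u_{\lambda,j}\|_{W^{1,p}}^p\le C_j$ gives ``in particular'' $\lambda\int_{\mathbb{R}^N}(|\nabla u_{\lambda,j}|^p+|u_{\lambda,j}|^p)\,\mathrm{d}x\to 0$ is a non sequitur: boundedness of that product does not force it to vanish (take $\|u_{\lambda}\|_{W^{1,p}}^p=C/\lambda$). This vanishing cannot be skipped, because the surviving lower bound $I(u_j)\ge\alpha_j$ is obtained from $I_{\lambda_n}(u_{n,j})\ge\alpha_j$ only after the $p$-term has been shown to disappear. The paper proves it by testing $I'_{\lambda_n}(u_{n,j})=0$ with $u_{n,j}$, passing to the limit using the \emph{strong} $H^1_V$ convergence (Step 5 of Lemma 2.4) together with the convergence of $\int u_{n,j}^2\log u_{n,j}^2$ (handled, as you anticipated, by splitting into positive and negative parts, generalized dominated convergence, and Fatou), and then subtracting the Nehari identity for the limit $u_j$ coming from $I'(u_j)=0$; the difference of the two identities is exactly $\lim_{n\to\infty}\lambda_n\|u_{n,j}\|_{W^{1,p}}^p$ and equals zero. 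The remainder of your plan (even functional, Palais--Smale for fixed $\lambda$ via the compact embedding, $\lambda$-independent comparison sets for the upper bounds, hand-made verification of the weak-solution identity on $C_0^\infty$ test functions) matches the paper's proof.
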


\begin{Remark}\label{rem:perturbation}
In contrast to previous strategies for the logarithmic Schr\"odinger equation---such as working in a specially tailored Orlicz-type space to restore smoothness, penalizing the logarithmic term near the origin, applying nonsmooth critical point theory to lower semicontinuous functionals, splitting $I$ into a $C^{1}$ part plus a convex lower semicontinuous\ part, or continuing from power nonlinearities $|u|^{q}$ as $q\to2^{+}$---the present paper introduces a $L^p$-perturbation at the level of the equation:
\[
I_{\lambda}(u)=\frac{\lambda}{p}\!\int_{\mathbb{R}^{N}}\!|u|^{p}\text{d}x+\frac{1}{2}\!\int_{\mathbb{R}^{N}}\!\big(|\nabla u|^{2}+(V(x)+1)u^{2}\big)\text{d}x-\frac{1}{2}\!\int_{\mathbb{R}^{N}}u^{2}\log u^{2}\text{d}x,
\]
and works in the intersection space $X=L^{p}(\mathbb{R}^{N})\cap H^{1}_{V}(\mathbb{R}^{N})$ with $p\in(\max\{\frac{2N-4}{N+2},1\},2)$. This has two decisive effects:\\ (i) it restores $C^{1}$-smoothness and compactness (Palais--Smale) for $I_{\lambda}$ without changing the original logarithmic structure;\\
(ii) it yields mountain--pass critical points with bounds independent of $\lambda$, so that one can pass to the limit $\lambda\to0^{+}$ and recover a nontrivial weak solution of the original problem in \(H_V^1\). 
\end{Remark} 

To the best of our knowledge, this is the first use of a small $L^p$-perturbation to regularize the variational framework for the logarithmic Schr\"odinger equation while staying in the natural Hilbert space $H^{1}_{V}$ in the limit. The approach is robust and may be adapted to other logarithmic-type or borderline nonlinearities where direct $C^{1}$-smoothness fails.

The paper is organized as follows. Section 2 introduces the perturbation framework and proves Theorem \ref{thm1.1}. Section 3 proves Theorem \ref{thm1.2}.

\noindent{\bf Notations.} In this paper we use the following notations:
\begin{itemize}
    \item $|u|_{s}=\left(\int_{\mathbb{R}^N}|u|^{s}\text{d}x\right)^{1/s}$ denotes the usual norm in $L^{s}$-space.
    \item $o_n(1)$ denotes a vanishing sequence.
    \item $u_{+}=\max\{u,0\}$ and $u_{-}=-\min\{u,0\}$.
    \item The weak and strong convergence are denoted by $u_n\rightharpoonup u$ and $u_n\to u$, respectively, as $n\to\infty$.
    \item $C, C_1, C_2,\cdots$ denote different positive constants.
\end{itemize}

\section{Proof of Theorem \ref{thm1.1}}
Let
\[
H^{1}_{V}(\mathbb{R}^{N})
:=\Big\{u\in H^{1}(\mathbb{R}^{N}) \;:\; \int_{\mathbb{R}^{N}} V(x)\,u^{2}\,\text{d}x<\infty \Big\},
\]
endowed with the inner product
\[
\langle u,v\rangle_{V}
:=\int_{\mathbb{R}^{N}}\big(\nabla u\cdot\nabla v+V(x)\,u\,v\big)\,\text{d}x
\]
and its norm $\|u\|_{H^{1}_{V}}:=\langle u,u\rangle_{V}^{1/2}$.
We call \(u\in H^{1}_{V}(\mathbb{R}^{N})\) a weak solution of \eqref{eq1.1} if 
\(\displaystyle \int_{\mathbb{R}^{N}} u^{2}|\log u^{2}|\,\text{d}x<\infty\) and
\[
\int_{\mathbb{R}^{N}}\big(\nabla u\cdot\nabla \varphi+V(x)\,u\,\varphi\big)\,\text{d}x
=\int_{\mathbb{R}^{N}} u\,\log u^{2}\,\varphi\,\text{d}x,
\quad \text{for all }\,~\varphi\in C_{0}^{\infty}(\mathbb{R}^{N}).
\]
The equation \eqref{eq1.1} is formally the Euler--Lagrange equation of the functional
\begin{equation}\label{eq2.1}
	I(u)
	=\frac{1}{2}\int_{\mathbb{R}^{N}}\big(|\nabla u|^{2}+(V(x)+1)\,u^{2}\big)\,\text{d}x
	-\frac{1}{2}\int_{\mathbb{R}^{N}} u^{2}\log u^{2}\,\text{d}x.
\end{equation}
We call \(u\in H^1_V(\mathbb{R}^N)\) a critical point of \(I\) (i.e. \(I'(u)=0\)), if \(\int_{\mathbb{R}^N}u^2|\log u^2|\,\text{d}x<\infty\) and 
\[
\int_{\mathbb{R}^{N}}\big(\nabla u\cdot\nabla \psi+V(x)\,u\,\psi\big)\,\text{d}x
-\int_{\mathbb{R}^{N}} u\,\log u^{2}\,\psi\,\text{d}x=0,
\quad \text{for all }\,~\psi\in H^{1}_{V}(\mathbb{R}^{N})~\text{and}~\displaystyle \int_{\mathbb{R}^{N}} \psi^{2}|\log \psi^{2}|\,\text{d}x<\infty.
\]
We note \(\int_{\mathbb{R}^{N}}| u\,\log u^{2}\,\psi|\,\text{d}x<\infty\) by \cite[Lemma~2.1]{MR3613541}.

By the logarithmic Sobolev inequality as in \cite{MR1817225} and \(\inf\limits_{x\in\mathbb{R}^{N}}V(x)>0\), we have \(I(u)>-\infty\) for all \(u\in H^{1}_{V}(\mathbb{R}^{N})\).
However, there exists \(u\in H^{1}_{V}(\mathbb{R}^{N})\) with \(\int_{\mathbb{R}^{N}}u^{2}\log u^{2}\,\text{d}x=-\infty\), so in general \(I\) takes values in \((-\infty,+\infty]\) and is not \(C^{1}\) on \(H^{1}_{V}(\mathbb{R}^{N})\).

To overcome this lack of smoothness, we consider a new perturbative variational approach. Fix \(\lambda\in(0,1]\) and \(p\in(\max\{\frac{2N-4}{N+2},1\},2)\), we define the working space $X$ and its norm
\[
X:=L^{p}(\mathbb{R}^{N})\cap H^{1}_{V}(\mathbb{R}^{N}),
\]
\[
\|u\|_{X} := |u|_{p} + \|u\|_{H^{1}_{V}},
\quad\text{for all }~u\in X.
\]
We look for \(u\in X\) such that
\begin{equation}\label{eq2.2}
	\begin{aligned}
	&\lambda\!\int_{\mathbb{R}^{N}}\! |u|^{p-2}u\,\phi\,\text{d}x
	+\!\int_{\mathbb{R}^{N}}\!\big(\nabla u\cdot\nabla \phi+V(x)\,u\,\phi\big)\,\text{d}x\\
	&=\!\int_{\mathbb{R}^{N}}\! u\,\log u^{2}\,\phi\,\text{d}x,
	\quad\text{for all }~\phi\in X.
\end{aligned}
\end{equation}
The associated \(C^{1}\) functional on \(X\) is defined by 
\begin{equation*}
	\begin{aligned}
		I_{\lambda}(u)
		&=\frac{\lambda}{p}\int_{\mathbb{R}^{N}}|u|^{p}\,\text{d}x
		+\frac{1}{2}\int_{\mathbb{R}^{N}}\big(|\nabla u|^{2}+(V(x)+1)\,u^{2}\big)\,\text{d}x
		-\frac{1}{2}\int_{\mathbb{R}^{N}} u^{2}\log u^{2}\,\text{d}x.
	\end{aligned}
\end{equation*}
Using the elementary estimate
\[
|t^{2}\log t^{2}|
\;\le\; C_{\varepsilon}\,\big(|t|^{2-\varepsilon}+|t|^{2+\varepsilon}\big),
\quad\text{for all }~t\in\mathbb{R},
\]
where $\varepsilon\in(0,1)$ is small enough and $C_{\varepsilon}$ is a positive constant depending on $\varepsilon$.
Together with the embeddings \(X\hookrightarrow L^{2\pm\varepsilon}(\mathbb{R}^{N})\), this implies that \(I_{\lambda}\in C^{1}(X)\). Moreover, for any $\phi\in X$, its derivative is
\[
\begin{aligned}
\big\langle I'_{\lambda}(u),\phi\big\rangle=\lambda\!\int_{\mathbb{R}^{N}}\! |u|^{p-2}u\,\phi\,\text{d}x
+\!\int_{\mathbb{R}^{N}}\!\big(\nabla u\cdot\nabla \phi+V(x)\,u\,\phi\big)\,\text{d}x
-\!\int_{\mathbb{R}^{N}}\! u\,\log u^{2}\,\phi\,\text{d}x,
\end{aligned}
\]
so that \(I'_{\lambda}(u)=0\) if and only if \(u\) satisfies \eqref{eq2.2}.

\begin{Lemma}\cite{MR1615056}\label{lem2.1}
	Define the linear operator
	\[
	\mathcal{H}:H^{1}_{V}(\mathbb{R}^{N})\to \big(H^{1}_{V}(\mathbb{R}^{N})\big)^{*},\qquad
	\langle \mathcal{H}u,v\rangle_{V}
	=\int_{\mathbb{R}^{N}}\big(\nabla u\cdot\nabla v+V(x)\,u\,v\big)\,\text{d}x,\quad\text{for all }~v\in H^{1}_{V}(\mathbb{R}^N),
	\]
	which is bounded and strongly monotone:
	\[
	\langle \mathcal{H}u-\mathcal{H}v,\,u-v\rangle_{V}
	=\|u-v\|_{H^{1}_{V}}^{2}\quad\text{for all }~u,v\in H^{1}_{V}(\mathbb{R}^{N}).
	\]
	 Let $\langle\cdot,\cdot\rangle$ be the duality pairing between a Banach space $X$ and its dual $X^{*}$. For $m\in(1,2)$, define
	\[
	\mathcal{L}_{m}:L^{m}(\mathbb{R}^{N})\to \big(L^{m}(\mathbb{R}^{N})\big)^{*},\qquad
	\langle \mathcal{L}_{m}u,v\rangle
	=\int_{\mathbb{R}^{N}}|u|^{\,m-2}u\,v\,\text{d}x,\quad\text{for all }~v\in L^{m}(\mathbb{R}^N).
	\]
	The operator $\mathcal{L}_{m}$ is bounded, semicontinuous, coercive, and monotone; in particular it is of type (S): if $u_n\rightharpoonup u$ in $L^{m}(\mathbb{R}^{N})$ and $\langle \mathcal{L}_{m}u_n,\,u_n-u\rangle\to0$, then $u_n\to u$ in $L^{m}(\mathbb{R}^{N})$.
\end{Lemma}

First, we show $I_{\lambda}$ satisfies the Palais--Smale condition. 

\begin{Lemma}\label{lem2.2}
	Assume (V) holds. For any fixed $\lambda\in(0,1]$, the functional $I_{\lambda}$ satisfies the Palais--Smale condition on $X$.
\end{Lemma}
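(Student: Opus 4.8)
The plan is to establish the Palais--Smale condition in two stages: (i) every Palais--Smale sequence $\{u_n\}\subset X$ for $I_{\lambda}$ is bounded in $X$; (ii) boundedness forces a strongly convergent subsequence. Throughout, the $p$-Laplacian terms will enter only through nonnegativity and monotonicity, while the logarithmic term is controlled by the logarithmic Sobolev inequality (in stage (i)) and by Lebesgue-space compactness (in stage (ii)).

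For stage (i) I would first test with $u_n$: the identity $I_{\lambda}(u_n)-\tfrac12\langle I'_{\lambda}(u_n),u_n\rangle=\lambda\big(\tfrac1p-\tfrac12\big)\|u_n\|_{W^{1,p}}^{p}+\tfrac12|u_n|_2^{2}$ holds because the $\nabla u$- and $u^{2}\log u^{2}$-contributions cancel and the remaining quadratic piece is $\tfrac12\int u_n^{2}$. Since the left-hand side is $\le C+o(1)\|u_n\|_{X}$ and $p\in(1,2)$ makes $\|u_n\|_{W^{1,p}}^{p}$ superlinear while $\|u_n\|_{W^{1,p}}$ is sublinear in it, Young's inequality absorbs the $W^{1,p}$-part and yields $\|u_n\|_{W^{1,p}}^{p}+|u_n|_2^{2}\le C\big(1+\|u_n\|_{H^{1}_{V}}\big)$. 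Then from $I_{\lambda}(u_n)\le C$ one gets $\tfrac12\|u_n\|_{H^{1}_{V}}^{2}\le C+\tfrac12\int u_n^{2}\log u_n^{2}$, and invoking the logarithmic Sobolev inequality in the form $\int u^{2}\log u^{2}\le\delta|\nabla u|_2^{2}+|u|_2^{2}\log|u|_2^{2}+C_{\delta}|u|_2^{2}$ with $\delta$ small, together with $|u_n|_2^{2}\le C(1+\|u_n\|_{H^{1}_{V}})$, makes the offending term $|u_n|_2^{2}\log|u_n|_2^{2}$ of order $o\big(\|u_n\|_{H^{1}_{V}}^{2}\big)$; absorbing it leaves $\|u_n\|_{H^{1}_{V}}\le C$, hence $\{u_n\}$ bounded in $X$. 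Closing this a priori estimate is the main obstacle: the logarithmic nonlinearity is not subcritical in the usual sense, and it is precisely the interplay of the two test-function identities with the log-Sobolev inequality that tames it.

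For stage (ii), since $X=W^{1,p}(\mathbb{R}^{N})\cap H^{1}_{V}(\mathbb{R}^{N})$ is reflexive I may assume $u_n\rightharpoonup u$ in $X$, hence weakly in $W^{1,p}$ and in $H^{1}_{V}$, and $u_n\to u$ a.e. Condition (V) (with $V\to+\infty$) gives the compact embedding $H^{1}_{V}(\mathbb{R}^{N})\hookrightarrow L^{q}(\mathbb{R}^{N})$ for $q\in[2,2^{*})$, so $u_n\to u$ in $L^{2}$ and in $L^{2+\varepsilon}$; interpolating the $L^{2}$-convergence with the $W^{1,p}$-bound (taking $\varepsilon$ small so that $p<2-\varepsilon$) gives $u_n\to u$ in $L^{2-\varepsilon}$ as well. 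With the elementary bound $|t\log t^{2}|\le C_{\varepsilon}(|t|^{1-\varepsilon}+|t|^{1+\varepsilon})$ and Hölder against the conjugate exponents $(2\mp\varepsilon)'=(2\mp\varepsilon)/(1\mp\varepsilon)$, both $\int(u_n\log u_n^{2})(u_n-u)\to0$ and $\int(u\log u^{2})(u_n-u)\to0$, so the logarithmic part of $\langle I'_{\lambda}(u_n)-I'_{\lambda}(u),u_n-u\rangle$ vanishes in the limit.

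Finally I would test $I'_{\lambda}(u_n)-I'_{\lambda}(u)$ against $u_n-u$: the left side tends to $0$ (since $I'_{\lambda}(u_n)\to0$ in $X^{*}$, $\{u_n-u\}$ is bounded, and $u_n-u\rightharpoonup0$), so, using the previous paragraph,
\[
\lambda\!\int\!\big(|\nabla u_n|^{p-2}\nabla u_n-|\nabla u|^{p-2}\nabla u\big)\!\cdot\!\nabla(u_n-u)+\lambda\!\int\!\big(|u_n|^{p-2}u_n-|u|^{p-2}u\big)(u_n-u)+\|u_n-u\|_{H^{1}_{V}}^{2}\ \longrightarrow\ 0 .
\]
All three summands are nonnegative (monotonicity of $\xi\mapsto|\xi|^{p-2}\xi$ and positivity of $\|\cdot\|_{H^{1}_{V}}^{2}$), hence each tends to $0$; in particular $\|u_n-u\|_{H^{1}_{V}}\to0$, and combining the decay of the first two summands with a standard algebraic inequality for the $p$-Laplacian operator ($1<p<2$) and the $W^{1,p}$-bound yields $\|u_n-u\|_{W^{1,p}}\to0$. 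Therefore $u_n\to u$ in $X$, which proves the Palais--Smale condition.
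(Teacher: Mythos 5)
Your proposal is correct and follows essentially the same route as the paper: boundedness via the identity $2I_{\lambda}(u_n)-\langle I'_{\lambda}(u_n),u_n\rangle$ combined with the logarithmic Sobolev inequality, then strong convergence via the compact embedding from (V), a H\"older estimate killing the logarithmic term, and the monotonicity of the $p$-Laplacian part together with the strong monotonicity of the $H^{1}_{V}$ quadratic form. The only cosmetic differences are your choice of H\"older exponents $2\pm\varepsilon$ (the paper uses $\tfrac{p+2}{p}$ and $\tfrac{p+2}{2}$) and your use of the explicit algebraic inequality for $|\xi|^{p-2}\xi$ with $1<p<2$ where the paper invokes the type (S) property of $\mathcal{L}_p$.
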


\begin{proof}
	Let $\{u_n\}\subset X$ be a Palais--Smale sequence, that is,
\[
I_{\lambda}(u_n)\ \text{is bounded}
\quad\text{and}\quad
I'_{\lambda}(u_n)\to0
\quad\text{in }X^{*}.
\]
Since $I_{\lambda}(u_n)$ is bounded and $I'_{\lambda}(u_n)\to0$ in $X^*$, we compute
	\[
	\begin{aligned}
		C+o_n(1)\,\|u_n\|_{X}
		\geq 2I_{\lambda}(u_n)-\big\langle I'_{\lambda}(u_n),u_n\big\rangle=\frac{(2-p)\lambda}{p}\!\int_{\mathbb{R}^N}\!|u_n|^{p}\,\text{d}x
		+\int_{\mathbb{R}^N}\!u_n^{2}\,\text{d}x.
	\end{aligned}
	\]
	Hence there exists $C>0$ such that
	\begin{equation}\label{eq2.4}
		C+o_n(1)\,\|u_n\|_{X}\ \ge\ C\,|u_n|_{p}^{p}+|u_n|_{2}^{2}.
	\end{equation}
	
	On the other hand, by the logarithmic Sobolev inequality (see \cite{MR1817225}),
	for any $a>0$,
	\[
	\int_{\mathbb{R}^N}u_n^{2}\log u_n^{2}\,\text{d}x
	\;\le\; \frac{a^{2}}{\pi}\,|\nabla u_n|_{2}^{2}
	+\Big(\log |u_n|_{2}^{2}-N(1+\log a)\Big)\,|u_n|_{2}^{2}.
	\]
	Choose $a>0$ small so that $\frac{a^{2}}{\pi}\le \frac12$, then
	\begin{equation}\label{eq2.5}
		\int_{\mathbb{R}^N}u_n^{2}\log u_n^{2}\,\text{d}x
		\;\le\; \frac12\,|\nabla u_n|_{2}^{2}
		+C\,\big(1+|u_n|_{2}^{2}\log |u_n|_{2}^{2}\big).
	\end{equation}
	From \eqref{eq2.4} we have, for $n$ large,
	\[
	|u_n|_{2}^{2}\ \le\ C+o_n(1)\,\|u_n\|_{X}
\le C\,(1+\|u_n\|_{X}).
	\]
	Fix $\delta\in(0,p-1)$. Using the elementary bound $\log r\le C_{\delta}\,(1+r^{\,\delta})$ for all $r\ge 0$,
	we estimate
	\[
	|u_n|_{2}^{2}\log |u_n|_{2}^{2}
	\ \le\ C_{\delta}\,\big(1+(1+\|u_n\|_{X})^{\delta}\big)\,(1+\|u_n\|_{X})
	\ \le\ C'_{\delta}\,\big(1+\|u_n\|_{X}^{\,1+\delta}\big).
	\]
	Plugging this into \eqref{eq2.5} yields
	\begin{equation}\label{eq:log-est}
		\int_{\mathbb{R}^N}u_n^{2}\log u_n^{2}\,\text{d}x
		\ \le\ \frac12\,|\nabla u_n|_{2}^{2}
		+C\,\big(1+\|u_n\|_{X}^{\,1+\delta}\big).
	\end{equation}
	Therefore, using \eqref{eq2.4} and \eqref{eq:log-est} in the identity for $2I_{\lambda}(u_n)$, we have
	\[
	\begin{aligned}
		C\ \ge\ 2I_{\lambda}(u_n)
		&=\frac{2\lambda}{p}\!\int_{\mathbb{R}^N}\!|u_n|^{p}\,\text{d}x
		+\int_{\mathbb{R}^N}\!\big(|\nabla u_n|^{2}+(V(x)+1)u_n^{2}\big)\,\text{d}x
		-\int_{\mathbb{R}^N}\!u_n^{2}\log u_n^{2}\,\text{d}x \\
		&\ge \frac{2\lambda}{p}\,|u_n|_{p}^{p}
		+\frac12\,\|u_n\|_{H^{1}_{V}}^{2}
		- C\,\big(1+\|u_n\|_{X}^{\,1+\delta}\big),\quad\text{for}~n~\text{large}.
	\end{aligned}
	\]
	Since $1+\delta<p$ (recall $\delta\in(0,p-1)$), the right-hand side controls $\|u_n\|_{X}$ for large values,
	hence $\{u_n\}$ is a bounded sequence in $X$.
	
	Up to a subsequence, $u_n\rightharpoonup u$ in $X$. By the compact embedding induced by (V) (see e.g.\ \cite{MR1349229}),
\[
u_n\to u\quad\text{in }L^s(\mathbb R^N),\qquad 2\le s<2^*.
\]
In particular,
\[
u_n\to u\quad\text{in }L^2(\mathbb R^N).
\]
Since $\{u_n\}$ is bounded in $L^p(\mathbb R^N)$, interpolation between $L^p(\mathbb R^N)$ and $L^2(\mathbb R^N)$ gives
\[
u_n\to u\quad\text{in }L^r(\mathbb R^N),\qquad p<r<2.
\]
Taking $r=\frac{p+2}{2}$, we obtain
\[
u_n\to u\quad\text{in }L^{\frac{p+2}{2}}(\mathbb R^N).
\]
	Since $I'_{\lambda}(u_n)\to0$ in $X^{*}$ and $\{u_n\}$ is a bounded sequence in $X$, we have
	\[
	\big\langle I'_{\lambda}(u_n),u_n-u\big\rangle=o_{n}(1).
	\]
	Expanding the derivative gives
	\begin{equation}\label{eq:split}
		\begin{aligned}
		&\lambda\!\int_{\mathbb{R}^N}\!|u_n|^{p-2}u_n(u_n-u)\,\text{d}x+\!\int_{\mathbb{R}^N}\!\Big(\nabla u_n\cdot\nabla (u_n-u)+V(x)u_n(u_n-u)\Big)\,\text{d}x\\
		&=\int_{\mathbb{R}^N}\!u_n\log u_n^{2}\,(u_n-u)\,\text{d}x+o_{n}(1).
	\end{aligned}
	\end{equation}
	Due to \(p\in(\max\{\frac{2N-4}{N+2},1\},2)\), we can choose $\varepsilon>0$ small so that 
	\[
	(1+\varepsilon)\,\frac{p+2}{p}\;<\;2^{*}~\text{and}~(1-\varepsilon)\,\frac{p+2}{p}\;>\;p.
	\]
	Using $|t\log t^{2}|\le C_{\varepsilon}\big(|t|^{1-\varepsilon}+|t|^{1+\varepsilon}\big)$ and H\"older's inequality,
	\begin{equation}\label{eq8}
	\begin{aligned}
		\left|\int_{\mathbb{R}^N}u_n\log u_n^{2}\,(u_n-u)\,\text{d}x\right|
		&\le C_{\varepsilon}\,\Big|\,|u_n|^{1-\varepsilon}+|u_n|^{1+\varepsilon}\Big|_{\frac{p+2}{p}}
		\,|u_n-u|_{\frac{p+2}{2}}
		\;=\;o_{n}(1),
	\end{aligned}
	\end{equation}
	since $\{u_n\}$ is bounded in $L^{(1\pm\varepsilon)\frac{p+2}{p}}(\mathbb{R}^N)$ and $u_n\to u$ in $L^{\frac{p+2}{2}}(\mathbb{R}^N)$.
	
	It follows from \eqref{eq:split}, \eqref{eq8} and Lemma \ref{lem2.1} that
	\begin{equation}\label{eq9}
	\lambda\,\langle \mathcal{L}_{p}u_n,\,u_n-u\rangle
	+\langle \mathcal{H}u_n,\,u_n-u\rangle_{V}
	=o_{n}(1).
	\end{equation}
	Since $\{u_n\}$ is bounded in $X=L^{p}(\mathbb{R}^{N})\cap H^{1}_{V}(\mathbb{R}^{N})$, up to a subsequence
	\[
	u_n \rightharpoonup u \ \text{ in }\ H^{1}_{V}(\mathbb{R}^{N})
	\quad\text{and}\quad
	u_n \rightharpoonup u \ \text{ in }\ L^{p}(\mathbb{R}^{N}).
	\]
	Using the linearity of $\mathcal{H}$ and the definition of the $H^{1}_{V}$-inner product,
	\begin{equation}\label{eq10}
	\langle \mathcal{H}u_n,\,u_n-u\rangle_{V}
	=\langle \mathcal{H}(u_n-u),\,u_n-u\rangle_{V}
	+\langle \mathcal{H}u,\,u_n-u\rangle_{V}
	=\|u_n-u\|_{H^{1}_{V}}^{2}+o_{n}(1),
	\end{equation}
	because $\langle \mathcal{H}u,\,\cdot\,\rangle_{V}$ is a continuous functional on $H^{1}_{V}(\mathbb{R}^{N})$ and $u_n-u\rightharpoonup 0$ in $H^{1}_{V}(\mathbb{R}^{N})$.
	Moreover, by monotonicity of $\mathcal{L}_{p}$,
	\begin{equation}\label{eq11}
	\langle \mathcal{L}_{p}u_n,\,u_n-u\rangle \ge \langle \mathcal{L}_{p}u,\,u_n-u\rangle=o_{n}(1),
	\end{equation}
	since $u_n-u\rightharpoonup 0$ in $L^{p}(\mathbb{R}^{N})$ and $\mathcal{L}_{p}u\in (L^{p}(\mathbb{R}^{N}))^{*}$.
	
	Combining \eqref{eq9} and \eqref{eq10}, it gives
	\[
	\lambda\,\langle \mathcal{L}_{p}u_n,\,u_n-u\rangle+\|u_n-u\|_{H^{1}_{V}}^{2}=o_{n}(1).
	\]
	Hence $\limsup\limits_{n\to\infty}\,\langle \mathcal{L}_{p}u_n,\,u_n-u\rangle\le 0$, while the monotonicity bound \eqref{eq11} yields $\liminf\limits_{n\to\infty}\,\langle \mathcal{L}_{p}u_n,\,u_n-u\rangle\ge 0$; thus
	\[
	\langle \mathcal{L}_{p}u_n,\,u_n-u\rangle\to 0
	\quad\text{and}\quad
	\|u_n-u\|_{H^{1}_{V}}\to 0.
	\]
	Finally, since $\mathcal{L}_{p}$ is of type (S), from $\langle \mathcal{L}_{p}u_n,\,u_n-u\rangle\to 0$ and $u_n\rightharpoonup u$ in $L^{p}(\mathbb{R}^{N})$ we conclude $u_n\to u$ in $L^{p}(\mathbb{R}^{N})$. Therefore $u_n\to u$ in $X$.
\end{proof}

\begin{Lemma}\label{lem:mp-geometry}
Assume (V) holds, let
\[
p\in\left(\max\left\{\frac{2N-4}{N+2},1\right\},2\right)
\]
and \(\lambda\in(0,1]\). Then there exist constants \(\rho>0\) and
\(\alpha>0\), independent of \(\lambda\), such that
\begin{equation}\label{eq:mp-lower}
\inf_{\substack{u\in X\\ \|u\|_{H^{1}_{V}}=\rho}}
I_{\lambda}(u)
\ge \alpha>0 .
\end{equation}
Moreover, there exists \(e_0\in X\), independent of \(\lambda\), such that
\[
\|e_0\|_{H_V^1}>\rho,
\qquad
I_\lambda(e_0)<0
\quad\text{for all }\lambda\in(0,1].
\]
\end{Lemma}

\begin{proof}
Fix \(\delta\in(0,2^*-2)\) and \(u\in X\). Using that
\(t^2\log t^2<0\) on \(t\in(0,1)\) and, for \(t\ge1\),
\[
t^2\log t^2\le C_\delta t^{2+\delta},
\]
we get
\[
\begin{aligned}
I_{\lambda}(u)
&=
\frac{\lambda}{p}\int_{\mathbb R^N}|u|^p\,\text{d}x
+\frac12\int_{\mathbb R^N}\bigl(|\nabla u|^2+(V(x)+1)u^2\bigr)\,\text{d}x
-\frac12\int_{\mathbb R^N}u^2\log u^2\,\text{d}x  \\
&\ge
\frac12\|u\|_{H_V^1}^2
-\frac12\int_{\{x\in\mathbb R^N:\ |u(x)|\ge1\}}u^2\log u^2\,\text{d}x  \\
&\ge
\frac12\|u\|_{H_V^1}^2
-C_\delta\int_{\mathbb R^N}|u|^{2+\delta}\,\text{d}x .
\end{aligned}
\]
By the Sobolev embedding, we have
\[
|u|_{2+\delta}\le C\|u\|_{H_V^1}.
\]
Therefore
\[
I_{\lambda}(u)
\ge
\frac12\|u\|_{H_V^1}^2
-C\|u\|_{H_V^1}^{2+\delta}.
\]
Consequently, there exist \(\rho,\alpha>0\), independent of
\(\lambda\in(0,1]\), such that
\[
\inf_{\substack{u\in X\\ \|u\|_{H_V^1}=\rho}}
I_{\lambda}(u)
\ge \alpha>0 .
\]

Now choose a nonzero function \(e\in C_0^\infty(\mathbb R^N)\). Along the ray
\(t\mapsto te\), we have
\[
\begin{aligned}
I_1(te)
&=
\frac{t^p}{p}\int_{\mathbb R^N}|e|^p\,\text{d}x
+\frac{t^2}{2}\int_{\mathbb R^N}\bigl(|\nabla e|^2+(V(x)+1)e^2\bigr)\,\text{d}x  \\
&\quad
-\frac{t^2\log t^2}{2}\int_{\mathbb R^N}e^2\,\text{d}x
-\frac{t^2}{2}\int_{\mathbb R^N}e^2\log e^2\,\text{d}x .
\end{aligned}
\]
Since \(p<2\) and \(e\not\equiv0\), the negative term
\[
-t^2\log t\int_{\mathbb R^N}e^2\,\text{d}x
\]
dominates as \(t\to+\infty\). Hence
\[
I_1(te)\to-\infty
\quad\text{as }t\to+\infty.
\]
Thus we may choose \(t_0>0\) so large that
\[
I_1(t_0e)<0,
\qquad
\|t_0e\|_{H_V^1}>\rho .
\]
Set
\[
e_0=t_0e .
\]
Since \(I_\lambda(u)\le I_1(u)\) for every \(u\in X\) and every
\(\lambda\in(0,1]\), we obtain
\[
I_\lambda(e_0)\le I_1(e_0)<0
\quad\text{for all }\lambda\in(0,1].
\]
The proof is complete.
\end{proof}

Let \(e_0\in X\) be given by Lemma~\ref{lem:mp-geometry}. Set
\[
\gamma_0(s)=s e_0,\qquad s\in[0,1].
\]
Then
\[
\gamma_0(0)=0,\qquad \gamma_0(1)=e_0,
\]
and
\[
I_\lambda(\gamma_0(1))<0
\quad\text{for all }\lambda\in(0,1].
\]
Define the mountain-pass value
\[
c(\lambda)=
\inf_{\gamma\in\Gamma_\lambda}
\sup_{s\in[0,1]}I_{\lambda}(\gamma(s)),
\]
where
\[
\Gamma_\lambda=
\left\{
\gamma\in C([0,1],X):\ \gamma(0)=0,\ \gamma(1)=e_0
\right\}.
\]
By \eqref{eq:mp-lower}, we have
\[
c(\lambda)\ge \alpha>0.
\]
Since \(I_{\lambda}\) satisfies the Palais--Smale condition by Lemma~\ref{lem2.2},
the Mountain Pass Theorem yields \(u_{\lambda}\in X\) such that
\begin{equation}\label{9}
I_{\lambda}'(u_{\lambda})=0,
\qquad
I_{\lambda}(u_{\lambda})=c(\lambda)\ge\alpha>0 .
\end{equation}

We shall use the following uniform estimate.
\begin{Lemma}\label{lem2.3}
There exist two positive constants \(m_1,m_2>0\), independent of
\(\lambda\in(0,1]\), such that
\[
m_1\le I_{\lambda}(u_{\lambda})\le m_2,
\]
where \(u_{\lambda}\in X\) is the mountain-pass critical point of
\(I_{\lambda}\) obtained in \eqref{9}.
\end{Lemma}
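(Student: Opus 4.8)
The plan is to read off both bounds directly from the mountain--pass construction, using that \emph{all} the geometric data in Lemma~\ref{lem:mp-geometry} were arranged to be independent of $\lambda$. For the lower bound, recall from \eqref{9} that $I_{\lambda}(u_{\lambda})=c(\lambda)$, and from \eqref{eq:mp-lower} that $c(\lambda)\ge\alpha$, where $\alpha>0$ is the constant produced in Lemma~\ref{lem:mp-geometry}, which does not depend on $\lambda\in(0,1]$. Hence one may simply take $m_{1}:=\alpha$.

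For the upper bound the key point is that the function $e\in C_{0}^{\infty}(\mathbb{R}^{N})$ and the height $t_{0}$ fixed in the proof of Lemma~\ref{lem:mp-geometry} can be chosen once and for all, uniformly in $\lambda\in(0,1]$. Indeed, since $\lambda\le 1$, for every such $\lambda$ one has the $\lambda$--free majorization
\[
I_{\lambda}(te)\ \le\ \frac{t^{p}}{p}\|e\|_{W^{1,p}}^{p}
+\frac{t^{2}}{2}\|e\|_{H^{1}_{V}}^{2}
+\frac{t^{2}}{2}\int_{\mathbb{R}^{N}}e^{2}\,\text{d}x
-t^{2}\log t\int_{\mathbb{R}^{N}}e^{2}\,\text{d}x
-\frac{t^{2}}{2}\int_{\mathbb{R}^{N}}e^{2}\log e^{2}\,\text{d}x ,
\]
and the right-hand side $\to-\infty$ as $t\to+\infty$; fixing $t_{0}$ so large that this expression is negative makes the path $\gamma_{0}(s)=st_{0}e$ admissible in $\Gamma$ simultaneously for all $\lambda\in(0,1]$. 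Then, by definition of $c(\lambda)$,
\[
I_{\lambda}(u_{\lambda})=c(\lambda)\ \le\ \sup_{s\in[0,1]}I_{\lambda}(\gamma_{0}(s))=\sup_{t\in[0,t_{0}]}I_{\lambda}(te).
\]
Splitting off the $p$--Laplacian term and using $\lambda\le1$, $0\le t\le t_{0}$,
\begin{align*}
\sup_{t\in[0,t_{0}]}I_{\lambda}(te)
&\le \frac{t_{0}^{p}}{p}\|e\|_{W^{1,p}}^{p} \\
&\quad+\sup_{t\in[0,t_{0}]}\Big(\frac{t^{2}}{2}\|e\|_{H^{1}_{V}}^{2}
+\frac{t^{2}}{2}\int_{\mathbb{R}^{N}}e^{2}\,\text{d}x
-t^{2}\log t\int_{\mathbb{R}^{N}}e^{2}\,\text{d}x
-\frac{t^{2}}{2}\int_{\mathbb{R}^{N}}e^{2}\log e^{2}\,\text{d}x\Big).
\end{align*}
The function under the last supremum is continuous on the compact interval $[0,t_{0}]$ (note $t^{2}\log t\to0$ as $t\to0^{+}$) and does not involve $\lambda$, so its supremum is a finite constant $C$; setting $m_{2}:=\frac{t_{0}^{p}}{p}\|e\|_{W^{1,p}}^{p}+C$ yields $I_{\lambda}(u_{\lambda})\le m_{2}$ for all $\lambda\in(0,1]$.

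There is no essential obstacle here: the computations are those already carried out in Lemma~\ref{lem:mp-geometry}. The only point that needs to be stated carefully is the uniformity claim — that the comparison path $\gamma_{0}$, equivalently the pair $(e,t_{0})$, may be selected independently of $\lambda$ — and this is exactly what the $\lambda$--independent majorization of $I_{\lambda}(te)$ (valid because $\lambda\in(0,1]$) guarantees.
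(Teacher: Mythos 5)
Your proposal is correct and follows essentially the same route as the paper: take $m_{1}:=\alpha$ from the $\lambda$-independent mountain-pass geometry, and for the upper bound use the pointwise comparison $I_{\lambda}\le I_{1}$ (valid since $\lambda\le1$) along a fixed admissible path $\gamma_{0}(s)=st_{0}e$ chosen independently of $\lambda$, so that $c(\lambda)\le\sup_{s\in[0,1]}I_{1}(\gamma_{0}(s))=:m_{2}<\infty$. Your explicit splitting of the $p$-Laplacian term is just an unrolled version of the paper's bound by $I_{1}$, so there is no substantive difference.
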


\begin{proof}
By Lemma~\ref{lem:mp-geometry} and the Palais--Smale condition
given by Lemma~\ref{lem2.2}, for every \(\lambda\in(0,1]\) there exists
\(u_{\lambda}\in X\) such that
\[
I_{\lambda}'(u_{\lambda})=0,
\qquad
I_{\lambda}(u_{\lambda})=c(\lambda)\ge \alpha>0,
\]
where \(\alpha\) is independent of \(\lambda\). Thus we choose
\[
m_1=\alpha .
\]

For the upper bound, let \(e_0\in X\) be the endpoint constructed in
Lemma~\ref{lem:mp-geometry}, and set
\[
\gamma_0(s)=s e_0,\qquad s\in[0,1].
\]
Then
\[
\gamma_0(0)=0,\qquad \gamma_0(1)=e_0,
\]
and hence \(\gamma_0\in\Gamma_\lambda\) for every \(\lambda\in(0,1]\).
Since \(\lambda\le1\), we have
\[
I_\lambda(u)\le I_1(u),
\qquad u\in X.
\]
Therefore, by the definition of \(c(\lambda)\),
\[
\begin{aligned}
c(\lambda)
&=
\inf_{\gamma\in\Gamma_\lambda}\sup_{s\in[0,1]}I_\lambda(\gamma(s))  \\
&\le
\sup_{s\in[0,1]}I_\lambda(\gamma_0(s))  \\
&\le
\sup_{s\in[0,1]}I_1(\gamma_0(s))
=:m_2 .
\end{aligned}
\]
The map \(s\mapsto I_1(\gamma_0(s))\) is continuous on the compact interval
\([0,1]\), and hence
\[
m_2<+\infty .
\]
Moreover, \(m_2\) is independent of \(\lambda\). Thus
\[
I_\lambda(u_\lambda)=c(\lambda)\le m_2 .
\]
Together with the lower bound, this proves the claim.
\end{proof}

Inspired by the variational perturbation approach introduced in \cite{MR3259554,MR2988727,MR2983045} for quasi-linear Schr\"{o}dinger equations, we obtain the following result:

\begin{Lemma}\label{lem2.5}
Suppose that \(\lambda_n\to0^+\),
\[
I_{\lambda_n}'(u_n)=0,
\qquad
I_{\lambda_n}(u_n)\le C,
\]
where \(C>0\) is independent of \(n\). Then, up to a subsequence, there exists
\(u_0\in H_V^1(\mathbb R^N)\) such that
\[
u_n\rightharpoonup u_0
\quad\text{in }H_V^1(\mathbb R^N),
\]
and \(u_0\) is a weak solution of \eqref{eq1.1}. Moreover, if
\[
\liminf_{n\to\infty}I_{\lambda_n}(u_n)>0,
\]
then \(u_0\ne0\).
\end{Lemma}

\begin{proof}
First we prove the uniform bounds. Since \(I'_{\lambda_n}(u_n)=0\) and
\(I_{\lambda_n}(u_n)\le C\), we have
\[
\begin{aligned}
2C
\ge
2I_{\lambda_n}(u_n)-\langle I_{\lambda_n}'(u_n),u_n\rangle=
\frac{2-p}{p}\lambda_n\int_{\mathbb R^N}|u_n|^p\,\text{d}x
+\int_{\mathbb R^N}u_n^2\,\text{d}x .
\end{aligned}
\]
Hence
\begin{equation}\label{eq:step1-basic}
\lambda_n|u_n|_p^p\le C,
\qquad
|u_n|_2^2\le C .
\end{equation}
By the logarithmic Sobolev inequality, choosing \(a>0\) sufficiently small, we obtain
\[
\int_{\mathbb R^N}u_n^2\log u_n^2\,\text{d}x
\le
\frac12|\nabla u_n|_2^2
+C\bigl(1+|u_n|_2^2|\log |u_n|_2^2|\bigr).
\]
Using \eqref{eq:step1-basic}, this gives
\[
\int_{\mathbb R^N}u_n^2\log u_n^2\,\text{d}x
\le
\frac12|\nabla u_n|_2^2+C .
\]
Therefore,
\[
\begin{aligned}
2C
&\ge
2I_{\lambda_n}(u_n) \\
&=
\frac{2\lambda_n}{p}\int_{\mathbb R^N}|u_n|^p\,\text{d}x
+
\int_{\mathbb R^N}\bigl(|\nabla u_n|^2+(V(x)+1)u_n^2\bigr)\,\text{d}x
-
\int_{\mathbb R^N}u_n^2\log u_n^2\,\text{d}x  \\
&\ge
\frac{2\lambda_n}{p}|u_n|_p^p
+
\frac12\|u_n\|_{H_V^1}^2
-C .
\end{aligned}
\]
Thus
\begin{equation}\label{eq:uniform-HV-bound}
\|u_n\|_{H_V^1}\le C,
\qquad
\lambda_n|u_n|_p^p\le C .
\end{equation}
Consequently, up to a subsequence,
\[
u_n\rightharpoonup u_0
\quad\text{in }H_V^1(\mathbb R^N)
\]
for some \(u_0\in H_V^1(\mathbb R^N)\). Since \(V(x)\to+\infty\) as
\(|x|\to\infty\), the embedding
\[
H_V^1(\mathbb R^N)\hookrightarrow L^s(\mathbb R^N)
\]
is compact for every \(s\in[2,2^*)\). Hence
\[
u_n\to u_0
\quad\text{in }L^s(\mathbb R^N),\quad 2\le s<2^*,
\]
and
\[
u_n(x)\to u_0(x)
\quad\text{a.e. in }\mathbb R^N .
\]

We next pass to the limit in the equation. For every
\(\varphi\in C_0^\infty(\mathbb R^N)\), we have
\[
\lambda_n\int_{\mathbb R^N}|u_n|^{p-2}u_n\varphi\,\text{d}x
+
\int_{\mathbb R^N}\bigl(\nabla u_n\cdot\nabla\varphi+V(x)u_n\varphi\bigr)\,\text{d}x
=
\int_{\mathbb R^N}u_n\log u_n^2\,\varphi\,\text{d}x .
\]
By H\"older's inequality and \eqref{eq:uniform-HV-bound},
\[
\begin{aligned}
\left|
\lambda_n\int_{\mathbb R^N}|u_n|^{p-2}u_n\varphi\,\text{d}x
\right|
&\le
\lambda_n |u_n|_p^{p-1}|\varphi|_p  \\
&=
\lambda_n^{\frac1p}
\bigl(\lambda_n |u_n|_p^p\bigr)^{\frac{p-1}{p}}
|\varphi|_p
\to0 .
\end{aligned}
\]
Moreover,
\[
\int_{\mathbb R^N}\bigl(\nabla u_n\cdot\nabla\varphi+V(x)u_n\varphi\bigr)\,\text{d}x
\to
\int_{\mathbb R^N}\bigl(\nabla u_0\cdot\nabla\varphi+V(x)u_0\varphi\bigr)\,\text{d}x .
\]
To treat the logarithmic term, fix \(q\in(2,2^*)\) and define
\(f(t)=t\log t^2\) for \(t\ne0\), \(f(0)=0\). Then \(f\) is continuous and
\[
|f(t)|\le C_q\bigl(1+|t|^{q-1}\bigr),
\qquad t\in\mathbb R.
\]
Since \(u_n\to u_0\) in \(L^q_{\mathrm{loc}}(\mathbb R^N)\), the continuity
of the Nemytskii operator associated with \(f\) yields
\[
u_n\log u_n^2\to u_0\log u_0^2
\quad\text{in }L^{\frac{q}{q-1}}_{\mathrm{loc}}(\mathbb R^N).
\]
Therefore, for every \(\varphi\in C_0^\infty(\mathbb R^N)\),
\[
\int_{\mathbb R^N}u_n\log u_n^2\,\varphi\,\text{d}x
\to
\int_{\mathbb R^N}u_0\log u_0^2\,\varphi\,\text{d}x .
\]
Letting \(n\to\infty\), we obtain
\begin{equation}\label{weaksolution}
\int_{\mathbb R^N}\bigl(\nabla u_0\cdot\nabla\varphi+V(x)u_0\varphi\bigr)\,\text{d}x
=
\int_{\mathbb R^N}u_0\log u_0^2\,\varphi\,\text{d}x
\end{equation}
for all \(\varphi\in C_0^\infty(\mathbb R^N)\).

It remains to prove the global logarithmic integrability required in the
definition of weak solutions. Set
\[
F(t)=(t^2\log t^2)_+,
\qquad
G(t)=(t^2\log t^2)_- .
\]
For any \(q\in(2,2^*)\), there exists \(C_q>0\) such that
\[
F(t)\le C_q |t|^q
\quad\text{for all }t\in\mathbb R.
\]
Since \(u_0\in H_V^1(\mathbb R^N)\subset L^q(\mathbb R^N)\), we have
\begin{equation}\label{eq:positive-part-u0}
\int_{\mathbb R^N}F(u_0)\,\text{d}x<+\infty .
\end{equation}

Let \(\eta_R\in C_0^\infty(\mathbb R^N)\) satisfy
\[
0\le \eta_R\le1,\qquad
\eta_R=1\ \text{in }B_R,\qquad
\eta_R=0\ \text{in }\mathbb R^N\setminus B_{2R},
\qquad
|\nabla\eta_R|\le \frac{C}{R}.
\]
We may also assume that \(\eta_R(x)\to1\) for a.e. \(x\in\mathbb R^N\) as
\(R\to+\infty\). Since
\[
u_0\log u_0^2\in L_{\mathrm{loc}}^{\frac{q}{q-1}}(\mathbb R^N)
\quad\text{for every }q\in(2,2^*),
\]
and
\[
\eta_Ru_0\in H_0^1(B_{2R})\subset L^q(B_{2R})
\quad\text{for every }q\in[2,2^*),
\]
we may use \(\eta_Ru_0\) as a test function in \eqref{weaksolution} by a
standard density argument. Thus
\[
\int_{\mathbb R^N}\eta_R\bigl(|\nabla u_0|^2+V(x)u_0^2\bigr)\,\text{d}x
+
\int_{\mathbb R^N}u_0\nabla u_0\cdot\nabla\eta_R\,\text{d}x
=
\int_{\mathbb R^N}\eta_Ru_0^2\log u_0^2\,\text{d}x .
\]
Since
\[
u_0^2\log u_0^2=F(u_0)-G(u_0),
\]
we get
\[
\begin{aligned}
\int_{\mathbb R^N}\eta_R G(u_0)\,\text{d}x
&=
\int_{\mathbb R^N}\eta_R F(u_0)\,\text{d}x
-
\int_{\mathbb R^N}\eta_R\bigl(|\nabla u_0|^2+V(x)u_0^2\bigr)\,\text{d}x  \\
&\quad
-
\int_{\mathbb R^N}u_0\nabla u_0\cdot\nabla\eta_R\,\text{d}x  \\
&\le
\int_{\mathbb R^N}F(u_0)\,\text{d}x
+
\left|
\int_{\mathbb R^N}u_0\nabla u_0\cdot\nabla\eta_R\,\text{d}x
\right|.
\end{aligned}
\]
Since \(V(x)\ge V_0>0\), we have \(u_0\in L^2(\mathbb R^N)\). Hence
\[
\left|
\int_{\mathbb R^N}u_0\nabla u_0\cdot\nabla\eta_R\,\text{d}x
\right|
\le
\frac{C}{R}|u_0|_2|\nabla u_0|_2
\to0 .
\]
Therefore,
\[
\sup_{R>1}
\int_{\mathbb R^N}\eta_R G(u_0)\,\text{d}x<+\infty .
\]
Letting \(R\to+\infty\) and using Fatou's lemma, we obtain
\[
\int_{\mathbb R^N}G(u_0)\,\text{d}x<+\infty .
\]
Combining this with \eqref{eq:positive-part-u0}, we conclude that
\[
\int_{\mathbb R^N}u_0^2|\log u_0^2|\,\text{d}x<+\infty .
\]
Thus \(u_0\) is a weak solution of \eqref{eq1.1}.

Finally, assume in addition that
\[
\liminf_{n\to\infty}I_{\lambda_n}(u_n)>0.
\]
We prove that \(u_0\ne0\). Suppose by contradiction that \(u_0=0\). Then
\(u_n\to0\) in \(L^q(\mathbb R^N)\) for every \(q\in(2,2^*)\). Testing
\(I_{\lambda_n}'(u_n)=0\) with \(u_n\), we have
\[
\lambda_n\int_{\mathbb R^N}|u_n|^p\,\text{d}x
+
\int_{\mathbb R^N}\bigl(|\nabla u_n|^2+V(x)u_n^2\bigr)\,\text{d}x
=
\int_{\mathbb R^N}u_n^2\log u_n^2\,\text{d}x .
\]
Since
\[
(t^2\log t^2)_+\le C_q |t|^q,
\quad t\in\mathbb R,
\]
we get
\[
\lambda_n\int_{\mathbb R^N}|u_n|^p\,\text{d}x
+
\int_{\mathbb R^N}\bigl(|\nabla u_n|^2+V(x)u_n^2\bigr)\,\text{d}x
\le
C_q\int_{\mathbb R^N}|u_n|^q\,\text{d}x
\to0 .
\]
Consequently,
\[
\lambda_n\int_{\mathbb R^N}|u_n|^p\,\text{d}x\to0,
\qquad
\int_{\mathbb R^N}u_n^2\,\text{d}x\to0.
\]
Using
\[
2I_{\lambda_n}(u_n)-\langle I_{\lambda_n}'(u_n),u_n\rangle
=
\frac{2-p}{p}\lambda_n\int_{\mathbb R^N}|u_n|^p\,\text{d}x
+
\int_{\mathbb R^N}u_n^2\,\text{d}x,
\]
we obtain
\[
I_{\lambda_n}(u_n)\to0,
\]
which contradicts
\[
\liminf_{n\to\infty}I_{\lambda_n}(u_n)>0.
\]
Therefore \(u_0\ne0\).
\end{proof}

\noindent\textbf{Proof of Theorem \ref{thm1.1}.}
By Lemma \ref{lem2.3}, for every \(\lambda\in(0,1]\) there exists a mountain-pass critical point
\(u_{\lambda}\in X\) of \(I_{\lambda}\), and there are constants \(m_1,m_2>0\),
independent of \(\lambda\), such that
\[
m_1\le I_{\lambda}(u_{\lambda})\le m_2 .
\]
Let \(\lambda_n\to0^+\) and set \(u_n=u_{\lambda_n}\). Then
\[
I_{\lambda_n}'(u_n)=0,
\qquad
I_{\lambda_n}(u_n)\le m_2 .
\]
By Lemma \ref{lem2.5}, passing to a subsequence if necessary, there exists
\(u_0\in H_V^1(\mathbb R^N)\) such that
\[
u_n\rightharpoonup u_0
\quad\text{in }H_V^1(\mathbb R^N),
\]
and \(u_0\) is a weak solution of \eqref{eq1.1}. Moreover, since
\[
\liminf_{n\to\infty}I_{\lambda_n}(u_n)\ge m_1>0,
\]
Lemma \ref{lem2.5} further implies that \(u_0\ne0\). Hence \(u_0\) is a nontrivial
weak solution of \eqref{eq1.1}. This completes the proof.
\hfill\(\square\)

\section{Proof of Theorem \ref{thm1.2}}
In this section, we prove the existence of an unbounded sequence of positive critical values for the energy functional:
\begin{align*}
I_{\lambda}(u)
&=\frac{\lambda}{p}\int_{\mathbb{R}^N}\!|u|^{p}\,\text{d}x
	+\frac12\int_{\mathbb{R}^N}\!\big(|\nabla u|^{2}+(V(x)+1)u^{2}\big)\,\text{d}x
	-\frac12\int_{\mathbb{R}^N}\!u^{2}\log u^{2}\,\text{d}x.
\end{align*}
We begin by recalling the definition of the Krasnosel'skii genus, a topological index associated with the $\mathbb{Z}_2$ symmetry group. 
\begin{Definition}
    Let $\mathcal{E}$ denote the family of all sets $A \subset X \setminus \{0\}$ that are closed in $X$ and symmetric with respect to the origin. For any nonempty $A \in \mathcal{E}$, the genus $\gamma(A)$ is defined as the smallest integer $n \in \mathbb{N}$ such that there exists a continuous odd mapping $\phi: A \to \mathbb{R}^n \setminus \{0\}$. We set $\gamma(A) = \infty$ if no such finite $n$ exists, and $\gamma(\emptyset) = 0$.
\end{Definition}
 The concept of genus, together with the deformation lemma below (see \cite[Theorem A.4]{MR1400007}), plays a fundamental role in both the construction and estimation of the minimax values of the functional $I_\lambda$.
 
\begin{Lemma}\label{lem3.1}
Assume that \(J\in C^1(X,\mathbb{R})\) satisfies the Palais--Smale condition. Set
\[
K_c=\{u\in X:\ J(u)=c\ \text{and}\ J'(u)=0\}.
\]
Given \(c\in\mathbb{R}\), \(\overline{\theta}>0\), and a neighborhood
\(\mathcal{O}\) of \(K_c\), there exist \(\theta\in(0,\overline{\theta})\) and
\(\sigma\in C([0,1]\times X,X)\) such that
\begin{enumerate}
\item[\((1)\)] \(\sigma(0,u)=u\) for all \(u\in X\);

\item[\((2)\)] if \(J(u)\notin[c-\overline{\theta},c+\overline{\theta}]\), then
\(\sigma(t,u)=u\) for all \(t\in[0,1]\);

\item[\((3)\)] if \(u\notin\mathcal{O}\) and \(J(u)\le c+\theta\), then
\[
J(\sigma(1,u))\le c-\theta;
\]

\item[\((4)\)] if \(K_c=\emptyset\) and \(J(u)\le c+\theta\), then
\[
J(\sigma(1,u))\le c-\theta;
\]

\item[\((5)\)] if \(J\) is even, then \(\sigma(t,\cdot)\) is odd for every
\(t\in[0,1]\), namely
\[
\sigma(t,-u)=-\sigma(t,u),\qquad u\in X.
\]
\end{enumerate}
\end{Lemma}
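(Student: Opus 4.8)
This is the classical quantitative deformation lemma with $\mathbb{Z}_2$ symmetry; the plan is to realize $\sigma$ as the (reparametrized) time-one map of a localized, renormalized — and, in the even case, symmetrized — pseudo-gradient flow for $J$.

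First I would establish a \emph{uniform gradient lower bound}: shrinking $\mathcal{O}$ slightly, there are $\bar b>0$ and a value $\bar\theta$ (no larger than the given one) such that $\|J'(u)\|_{X^*}\ge\bar b$ whenever $u\notin\mathcal{O}$ and $|J(u)-c|\le\bar\theta$. Otherwise one gets $u_n\notin\mathcal{O}$ with $J(u_n)\to c$ and $J'(u_n)\to0$ — a Palais–Smale sequence at level $c$ — which by hypothesis subconverges to some $u\in K_c$; since $X\setminus\mathcal{O}$ is closed, $u\notin\mathcal{O}$, contradicting that $\mathcal{O}$ is a neighborhood of $K_c$. (If $K_c=\emptyset$ one takes $\mathcal{O}=\emptyset$, which will also yield item~(4).)

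Next I would build the vector field. On the regular set $\{u:J'(u)\neq0\}$ pick a locally Lipschitz pseudo-gradient $v$ (so $\|v(u)\|\le 2\|J'(u)\|_{X^*}$ and $\langle J'(u),v(u)\rangle\ge\|J'(u)\|_{X^*}^2$) and set $W:=v/\|v\|^2$, so that $\langle J'(u),W(u)\rangle\ge\tfrac14$ and $\|W(u)\|\le 1/\|J'(u)\|_{X^*}$, which is bounded on $\{|J-c|\le\bar\theta\}\setminus\mathcal{O}$ by the previous step. If $J$ is even, replace $W$ by $\tfrac12(W(u)-W(-u))$: since $J'$ is odd this is again a locally Lipschitz pseudo-gradient, and now odd. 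Multiply by \emph{even} locally Lipschitz cutoffs: one equal to $1$ on $\{|J-c|\le\theta\}\setminus\mathcal{O}$ for a small $\theta\in(0,\bar\theta)$ still to be fixed and vanishing outside $\{|J-c|\le\bar\theta\}\setminus\mathcal{O}'$ for a slightly smaller $\mathcal{O}'\subset\mathcal{O}$, and one damping the norm so the field stays globally bounded. Extending by $0$, this produces a bounded, globally Lipschitz field $F:=-\chi W$ on $X$, odd whenever $J$ is even; let $\sigma(\cdot,u)$ be its flow on $[0,1]$, complete because $F$ is bounded.

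Finally I would read off the conclusions. Item~(1) is immediate. Along each orbit $\frac{d}{dt}J(\sigma(t,u))=\langle J'(\sigma),F(\sigma)\rangle\le0$, and $F$ vanishes where $|J-c|=\bar\theta$, so $\{|J-c|\le\bar\theta\}$ is invariant and orbits starting with $J(u)\notin[c-\bar\theta,c+\bar\theta]$ never move — item~(2). Where the cutoff equals $1$ one has $\frac{d}{dt}J(\sigma)\le-\tfrac14$; choosing $\theta$ small (comparing $2\theta$ with the available flow time of length $1$) a standard bookkeeping argument shows an orbit starting from $u\notin\mathcal{O}$ with $J(u)\le c+\theta$ reaches $\{J\le c-\theta\}$ by time $1$ unless it first enters $\mathcal{O}$ — which gives~(3), and~(4) is the case $\mathcal{O}=\emptyset$. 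For~(5): if $J$ is even, $F$ is odd, so $t\mapsto-\sigma(t,u)$ solves $\dot\eta=F(\eta)$ with $\eta(0)=-u$, hence $\sigma(t,-u)=-\sigma(t,u)$ by uniqueness. The main obstacle is the consistent simultaneous choice of $\theta$, of $\mathcal{O}'\subset\mathcal{O}$, and of the cutoffs so that (2)–(4) hold together — in particular keeping $\sigma$ the identity outside $[c-\bar\theta,c+\bar\theta]$ while still forcing a decrease of $2\theta$ on the relevant sublevel set — together with checking that the symmetrization $W\mapsto\tfrac12(W(u)-W(-u))$ and the even cutoffs really preserve the pseudo-gradient inequalities and local Lipschitz continuity, so that the oddness needed for~(5) is not lost.
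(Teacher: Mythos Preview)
Your proposal is a correct sketch of the classical pseudo-gradient deformation argument. However, the paper does not give its own proof of this lemma: it is simply quoted from the literature (see the sentence preceding the statement, which cites \cite[Theorem A.4]{MR1400007}). So there is no ``paper's proof'' to compare against; what you have written is essentially the standard proof one finds in that reference, and it is the right thing to do if you want a self-contained argument.
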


The following lemma is a further consequence of Lemma \ref{lem2.5}, which is necessary for the proof of multiplicity.
\begin{Lemma}\label{lem3.2}
Assume that \((V)\) holds and
\[
p\in\left(\max\left\{1,\frac{2N-4}{N+2}\right\},2\right).
\]
Suppose that \(\lambda_n\to0^+\),
\[
I_{\lambda_n}'(u_n)=0,
\qquad
I_{\lambda_n}(u_n)\le C,
\]
where \(C>0\) is independent of \(n\). Then, up to a subsequence, there exists
\(u_0\in H_V^1(\mathbb R^N)\) such that
\[
u_n\to u_0
\quad\text{in }H_V^1(\mathbb R^N).
\]
Moreover, \(u_0\) is a weak solution of \eqref{eq1.1} and
\[
\int_{\mathbb R^N}\bigl(|\nabla u_0|^2+V(x)u_0^2\bigr)\,\text{d}x
=
\int_{\mathbb R^N}u_0^2\log u_0^2\,\text{d}x .
\]
Furthermore,
\[
\lambda_n\int_{\mathbb R^N}|u_n|^p\,\text{d}x\to0,
\]
\[
\int_{\mathbb R^N}u_n^2\log u_n^2\,\text{d}x
\to
\int_{\mathbb R^N}u_0^2\log u_0^2\,\text{d}x .
\]
\end{Lemma}

\begin{proof}
By Lemma~\ref{lem2.5}, up to a subsequence, there exists
\(u_0\in H_V^1(\mathbb R^N)\) such that
\[
u_n\rightharpoonup u_0
\quad\text{in }H_V^1(\mathbb R^N),
\]
and \(u_0\) is a weak solution of \eqref{eq1.1}. In particular,
\[
\int_{\mathbb R^N}u_0^2|\log u_0^2|\,\text{d}x<+\infty .
\]
By the compact embedding induced by \((V)\), we also have
\[
u_n\to u_0
\quad\text{in }L^s(\mathbb R^N),\quad 2\le s<2^*,
\]
and
\[
u_n(x)\to u_0(x)
\quad\text{a.e. in }\mathbb R^N .
\]

Since \(u_0\) is a weak solution and
\[
\int_{\mathbb R^N}u_0^2|\log u_0^2|\,\text{d}x<+\infty,
\]
we may take \(u_0\) itself as a test function by a standard cut-off and density
argument. Hence
\begin{equation}\label{eq:limit-identity}
\int_{\mathbb R^N}\bigl(|\nabla u_0|^2+V(x)u_0^2\bigr)\,\text{d}x
=
\int_{\mathbb R^N}u_0^2\log u_0^2\,\text{d}x .
\end{equation}
Since \(I_{\lambda_n}'(u_n)=0\), testing with \(u_n\) gives
\begin{equation}\label{eq:un-identity}
\lambda_n\int_{\mathbb R^N}|u_n|^p\,\text{d}x
+
\int_{\mathbb R^N}\bigl(|\nabla u_n|^2+V(x)u_n^2\bigr)\,\text{d}x
=
\int_{\mathbb R^N}u_n^2\log u_n^2\,\text{d}x .
\end{equation}

Set
\[
F(t)=\bigl(t^2\log t^2\bigr)_+,
\qquad
G(t)=\bigl(t^2\log t^2\bigr)_- .
\]
For every \(q\in(2,2^*)\), there exists \(C_q>0\) such that
\[
F(t)\le C_q |t|^q
\quad\text{for all }t\in\mathbb R .
\]
Since \(u_n\to u_0\) in \(L^q(\mathbb R^N)\), the standard Nemytskii convergence
for subcritical growth yields
\begin{equation}\label{eq:positive-part-conv}
\int_{\mathbb R^N}F(u_n)\,\text{d}x
\to
\int_{\mathbb R^N}F(u_0)\,\text{d}x .
\end{equation}
On the other hand, by Fatou's lemma,
\begin{equation}\label{eq:negative-part-fatou}
\int_{\mathbb R^N}G(u_0)\,\text{d}x
\le
\liminf_{n\to\infty}
\int_{\mathbb R^N}G(u_n)\,\text{d}x .
\end{equation}

Let
\[
A_n=\int_{\mathbb R^N}\bigl(|\nabla u_n|^2+V(x)u_n^2\bigr)\,\text{d}x,
\qquad
A_0=\int_{\mathbb R^N}\bigl(|\nabla u_0|^2+V(x)u_0^2\bigr)\,\text{d}x .
\]
From \eqref{eq:un-identity}, \eqref{eq:positive-part-conv}, and
\eqref{eq:negative-part-fatou}, we obtain
\[
\begin{aligned}
\limsup_{n\to\infty}
\left(
\lambda_n\int_{\mathbb R^N}|u_n|^p\,\text{d}x
+
A_n
\right)
&=
\limsup_{n\to\infty}
\int_{\mathbb R^N}u_n^2\log u_n^2\,\text{d}x  \\
&\le
\lim_{n\to\infty}
\int_{\mathbb R^N}F(u_n)\,\text{d}x
-
\liminf_{n\to\infty}
\int_{\mathbb R^N}G(u_n)\,\text{d}x  \\
&\le
\int_{\mathbb R^N}F(u_0)\,\text{d}x
-
\int_{\mathbb R^N}G(u_0)\,\text{d}x  \\
&=
\int_{\mathbb R^N}u_0^2\log u_0^2\,\text{d}x
=
A_0,
\end{aligned}
\]
where the last equality follows from \eqref{eq:limit-identity}. Since
\(u_n\rightharpoonup u_0\) in \(H_V^1(\mathbb R^N)\), weak lower semicontinuity gives
\[
A_0\le \liminf_{n\to\infty}A_n.
\]
Together with
\[
\lambda_n\int_{\mathbb R^N}|u_n|^p\,\text{d}x\ge0,
\]
we conclude that
\[
A_n\to A_0
\]
and
\[
\lambda_n\int_{\mathbb R^N}|u_n|^p\,\text{d}x\to0 .
\]
The weak convergence \(u_n\rightharpoonup u_0\) in \(H_V^1(\mathbb R^N)\), combined
with \(A_n\to A_0\), yields
\[
u_n\to u_0
\quad\text{in }H_V^1(\mathbb R^N).
\]

Finally, by \eqref{eq:un-identity},
\[
\int_{\mathbb R^N}u_n^2\log u_n^2\,\text{d}x
=
\lambda_n\int_{\mathbb R^N}|u_n|^p\,\text{d}x
+
A_n .
\]
Passing to the limit and using the two convergences just proved, we obtain
\[
\int_{\mathbb R^N}u_n^2\log u_n^2\,\text{d}x
\to
A_0
=
\int_{\mathbb R^N}u_0^2\log u_0^2\,\text{d}x .
\]
The proof is complete.
\end{proof}

\begin{Lemma}\label{lem3.3}
For any fixed \(\lambda\in(0,1]\), the functional \(I_{\lambda}\) admits a sequence of critical points
\(\{u_{\lambda,j}\}_{j=1}^{\infty}\). Moreover, there exist constants
\(\alpha_j,\beta_j\), independent of \(\lambda\), such that
\[
\alpha_j\le I_{\lambda}(u_{\lambda,j})\le \beta_j,
\qquad
\alpha_j\to+\infty
\quad\text{as }j\to\infty .
\]
\end{Lemma}

\begin{proof}
Recall that
\[
X=L^p(\mathbb R^N)\cap H_V^1(\mathbb R^N),
\qquad
\|u\|_X=|u|_p+\|u\|_{H_V^1}.
\]
Let \(\{\phi_j\}\subset C_0^\infty(\mathbb R^N)\) be a linearly independent sequence
which is dense in \(X\). Set
\[
D_n=\operatorname{span}\{\phi_1,\dots,\phi_n\}.
\]
Applying the Gram--Schmidt procedure in \(H_V^1(\mathbb R^N)\), we may assume that
\[
D_n=\operatorname{span}\{e_1,\dots,e_n\},
\]
where \(\{e_j\}\subset C_0^\infty(\mathbb R^N)\) is orthonormal in
\(H_V^1(\mathbb R^N)\). Define
\[
X_n=X\cap D_n^{\perp_{H_V^1}}
=
\left\{
u\in X:\ \langle u,e_i\rangle_{H_V^1}=0,\ i=1,\dots,n
\right\}.
\]
Then \(X_{n+1}\subset X_n\), and
\[
X=D_n\oplus X_n .
\]

For \(\rho>0\), set
\[
\Sigma_\rho=\left\{u\in X:\ \|u\|_{H_V^1}\le \rho\right\}.
\]

We first prove that, for each \(n\in\mathbb N\), there exists \(\rho_n>0\), independent of
\(\lambda\), such that
\[
I_\lambda(u)<0
\quad\text{for all }u\in D_n\setminus\Sigma_{\rho_n}.
\]
Indeed, since all norms are equivalent on \(D_n\), for \(u=sw\), with
\(s>0\), \(w\in D_n\), and \(\|w\|_{H_V^1}=1\), we have
\[
\begin{aligned}
I_\lambda(sw)
&=
\frac{\lambda s^p}{p}\int_{\mathbb R^N}|w|^p\,\text{d}x
+\frac{s^2}{2}\int_{\mathbb R^N}\bigl(|\nabla w|^2+(V(x)+1)w^2\bigr)\,\text{d}x  \\
&\quad
-\frac{s^2\log s^2}{2}\int_{\mathbb R^N}w^2\,\text{d}x
-\frac{s^2}{2}\int_{\mathbb R^N}w^2\log w^2\,\text{d}x  \\
&\le
s^2\left(
C_{1,n}s^{p-2}
-C_{2,n}\log s
+C_{3,n}
\right).
\end{aligned}
\]
Here \(C_{1,n},C_{2,n},C_{3,n}>0\) are independent of \(w\) and \(\lambda\).
Since \(p<2\), it follows that
\[
I_\lambda(sw)\to-\infty
\quad\text{as }s\to+\infty,
\]
uniformly for \(w\in D_n\), \(\|w\|_{H_V^1}=1\), and \(\lambda\in(0,1]\).
Hence such a \(\rho_n\) exists. Moreover, \(\rho_n\) may be chosen as large as needed.

Next, for \(j\in\mathbb N\), define
\[
\theta_j
=
\sup_{\substack{u\in X_{j-1}\\ \|u\|_{H_V^1}=1}}
|u|_{2+\delta},
\qquad
0<\delta<2^*-2 .
\]
We claim that
\[
\theta_j\to0
\quad\text{as }j\to\infty.
\]
Indeed, if not, there exist \(\theta>0\) and \(u_j\in X_{j-1}\) such that
\[
\|u_j\|_{H_V^1}=1,
\qquad
|u_j|_{2+\delta}\ge \theta .
\]
For every fixed \(k\), we have
\[
\langle u_j,e_k\rangle_{H_V^1}=0
\]
for all sufficiently large \(j\). Hence \(u_j\rightharpoonup0\) in
\(H_V^1(\mathbb R^N)\). By the compact embedding induced by \((V)\),
\[
u_j\to0
\quad\text{in }L^{2+\delta}(\mathbb R^N),
\]
which contradicts \(|u_j|_{2+\delta}\ge\theta\). Thus \(\theta_j\to0\).

Set
\[
\rho_j'=\frac{1}{\theta_j}.
\]
Since the radii \(\rho_n\) above can be chosen arbitrarily large, we choose them so that
\[
\rho_n>\max_{1\le k\le n}\rho_k',
\qquad n=1,2,\dots .
\]
Then, for every \(n\ge j\),
\[
\rho_n>\rho_j' .
\]

Define
\[
G_n=
\left\{
H\in C(\Sigma_{\rho_n}\cap D_n,X):
H \text{ is odd and }
H=\operatorname{id}
\text{ on }\partial\Sigma_{\rho_n}\cap D_n
\right\}.
\]
Let \(\Gamma_j\) be the family of all sets of the form
\[
B=H\bigl((\Sigma_{\rho_n}\cap D_n)\setminus Y\bigr),
\]
where \(n\ge j\), \(H\in G_n\), and \(Y\subset\Sigma_{\rho_n}\cap D_n\) is open,
symmetric, and satisfies
\[
\gamma(\overline Y)\le n-j .
\]
Define
\[
c_{\lambda,j}
=
\inf_{B\in\Gamma_j}\sup_{u\in B}I_\lambda(u).
\]

We first prove the upper bound. Since \(0<\lambda\le1\), we have
\[
I_\lambda(u)\le P(u),
\]
where
\[
P(u)
=
\frac{1}{p}\int_{\mathbb R^N}|u|^p\,\text{d}x
+\frac12\int_{\mathbb R^N}\bigl(|\nabla u|^2+(V(x)+1)u^2\bigr)\,\text{d}x
-\frac12\int_{\mathbb R^N}u^2\log u^2\,\text{d}x .
\]
Since \(\Gamma_j\ne\emptyset\), for instance by taking \(n=j\), \(H=\operatorname{id}\),
and \(Y=\emptyset\), we may define
\[
\beta_j
=
\inf_{B\in\Gamma_j}\sup_{u\in B}P(u)<+\infty .
\]
Then
\[
c_{\lambda,j}\le \beta_j
\]
for every \(\lambda\in(0,1]\).

We next prove a lower bound independent of \(\lambda\). By Lemma~\ref{lem3.4}, for every
\(B\in\Gamma_j\),
\[
B\cap \partial\Sigma_{\rho_j'}\cap X_{j-1}\ne\emptyset .
\]
Hence
\[
c_{\lambda,j}
\ge
\inf_{u\in\partial\Sigma_{\rho_j'}\cap X_{j-1}}I_\lambda(u)
\ge
\inf_{u\in\partial\Sigma_{\rho_j'}\cap X_{j-1}}Q(u),
\]
where
\[
Q(u)
=
\frac12\int_{\mathbb R^N}\bigl(|\nabla u|^2+(V(x)+1)u^2\bigr)\,\text{d}x
-\frac12\int_{\mathbb R^N}u^2\log u^2\,\text{d}x .
\]
Using
\[
(t^2\log t^2)_+\le C|t|^{2+\delta},
\]
we obtain, for \(u\in\partial\Sigma_{\rho_j'}\cap X_{j-1}\),
\[
\begin{aligned}
Q(u)
&\ge
\frac12\|u\|_{H_V^1}^2
-C|u|_{2+\delta}^{2+\delta}  \\
&\ge
\frac12\|u\|_{H_V^1}^2
-C\theta_j^{2+\delta}\|u\|_{H_V^1}^{2+\delta}  \\
&=
\frac{1}{2\theta_j^2}-C .
\end{aligned}
\]
Set
\[
\alpha_j=\frac{1}{2\theta_j^2}-C .
\]
Then
\[
c_{\lambda,j}\ge\alpha_j
\quad\text{and}\quad
\alpha_j\to+\infty .
\]

It remains to show that \(c_{\lambda,j}\) is a critical value of \(I_\lambda\)
for all sufficiently large \(j\). Since \(\alpha_j\to+\infty\), there exists
\(j_0\in\mathbb N\) such that
\[
\alpha_j>0
\qquad\text{for all }j\ge j_0 .
\]
Fix \(j\ge j_0\). Then
\[
c_{\lambda,j}\ge \alpha_j>0.
\]
Assume, by contradiction, that \(c_{\lambda,j}\) is not a critical value of
\(I_\lambda\). Then
\[
K_{c_{\lambda,j}}=\emptyset .
\]
Choose \(\overline\theta>0\) so small that
\[
0<\overline\theta<c_{\lambda,j}.
\]
Since \(I_\lambda\) is even and satisfies the Palais--Smale condition,
Lemma~\ref{lem3.1}, applied to \(J=I_\lambda\), gives
\(\theta\in(0,\overline\theta)\) and an odd deformation
\[
\sigma\in C([0,1]\times X,X)
\]
such that
\[
\sigma(0,u)=u,
\]
\[
\sigma(t,u)=u
\quad\text{if }I_\lambda(u)\notin
[c_{\lambda,j}-\overline\theta,c_{\lambda,j}+\overline\theta],
\]
and
\[
I_\lambda(\sigma(1,u))\le c_{\lambda,j}-\theta
\quad\text{whenever }I_\lambda(u)\le c_{\lambda,j}+\theta .
\]
Since
\[
I_\lambda(u)<0
\quad\text{for all }u\in\partial\Sigma_{\rho_n}\cap D_n,
\]
and
\[
c_{\lambda,j}-\overline\theta>0,
\]
the deformation fixes the boundary:
\[
\sigma(t,u)=u
\quad
\text{for all }u\in\partial\Sigma_{\rho_n}\cap D_n,\ t\in[0,1].
\]

By the definition of \(c_{\lambda,j}\), there exists \(B\in\Gamma_j\) such that
\[
\sup_{u\in B}I_\lambda(u)\le c_{\lambda,j}+\theta .
\]
Write
\[
B=H\bigl((\Sigma_{\rho_n}\cap D_n)\setminus Y\bigr)
\]
for some \(n\ge j\), \(H\in G_n\), and \(Y\subset\Sigma_{\rho_n}\cap D_n\)
open, symmetric, with
\[
\gamma(\overline Y)\le n-j .
\]
Define
\[
\widetilde H(u)=\sigma(1,H(u)),
\qquad
u\in\Sigma_{\rho_n}\cap D_n .
\]
Since both \(\sigma(1,\cdot)\) and \(H\) are odd, \(\widetilde H\) is odd.
Moreover, because the deformation fixes the boundary and
\(H=\operatorname{id}\) on \(\partial\Sigma_{\rho_n}\cap D_n\), we have
\[
\widetilde H(u)=u
\quad\text{for all }u\in\partial\Sigma_{\rho_n}\cap D_n .
\]
Thus \(\widetilde H\in G_n\). Hence
\[
\widetilde B
=
\widetilde H\bigl((\Sigma_{\rho_n}\cap D_n)\setminus Y\bigr)
\in\Gamma_j .
\]
For every \(v\in\widetilde B\), there exists \(u\in B\) such that
\[
v=\sigma(1,u).
\]
Since \(I_\lambda(u)\le c_{\lambda,j}+\theta\), the deformation gives
\[
I_\lambda(v)=I_\lambda(\sigma(1,u))\le c_{\lambda,j}-\theta .
\]
Consequently,
\[
\sup_{v\in\widetilde B}I_\lambda(v)\le c_{\lambda,j}-\theta,
\]
which contradicts the definition of \(c_{\lambda,j}\). Therefore
\(c_{\lambda,j}\) is a critical value of \(I_\lambda\) for every \(j\ge j_0\).

Thus, for each \(j\ge j_0\), there exists \(u_{\lambda,j}\in X\) such that
\[
I_\lambda'(u_{\lambda,j})=0,
\qquad
I_\lambda(u_{\lambda,j})=c_{\lambda,j}.
\]
Moreover,
\[
\alpha_j\le I_\lambda(u_{\lambda,j})\le\beta_j,
\qquad
\alpha_j\to+\infty .
\]
Relabeling the sequence \(\{u_{\lambda,j}\}_{j\ge j_0}\), together with the
corresponding \(\alpha_j\) and \(\beta_j\), gives the desired sequence of
critical points. The proof is complete.
\end{proof}

\begin{Lemma}\cite[Lemma~2.4]{MR2988727}\label{lem3.4}
For $B\in\Gamma_{j}$, it follows that $B\cap\partial\Sigma_{\rho}\cap X_{j-1}\neq\emptyset$ provided $\rho<\rho_{n}$ for all $n\geq j$.
\end{Lemma}

\begin{proof}[Proof of Theorem \ref{thm1.2}]
By Lemma \ref{lem3.3}, for every \(\lambda\in(0,1]\), the functional
\(I_{\lambda}\) admits a sequence of critical points
\(\{u_{\lambda,j}\}_{j=1}^{\infty}\subset X\) such that
\[
\alpha_j\le I_{\lambda}(u_{\lambda,j})\le \beta_j,
\qquad
\alpha_j\to+\infty
\quad\text{as }j\to\infty,
\]
where \(\alpha_j,\beta_j\) are independent of \(\lambda\). After discarding
finitely many indices and relabeling, we may assume that \(\alpha_j>0\) for all
\(j\).

Let \(\lambda_n\to0^+\). For each fixed \(j\in\mathbb N\), set
\[
u_{n,j}=u_{\lambda_n,j}.
\]
Then
\[
I_{\lambda_n}'(u_{n,j})=0,
\qquad
\alpha_j\le I_{\lambda_n}(u_{n,j})\le \beta_j .
\]
Applying Lemma \ref{lem3.2} to the sequence \(\{u_{n,j}\}_{n}\), we obtain,
up to a subsequence depending on \(j\), a function \(u_j\in H_V^1(\mathbb R^N)\)
such that
\[
u_{n,j}\to u_j
\quad\text{in }H_V^1(\mathbb R^N),
\]
and \(u_j\) is a weak solution of problem \eqref{eq1.1}. Moreover, Lemma \ref{lem3.2} gives
\[
\lambda_n\int_{\mathbb R^N}|u_{n,j}|^p\,\text{d}x\to0
\]
and
\[
\int_{\mathbb R^N}u_{n,j}^2\log u_{n,j}^2\,\text{d}x
\to
\int_{\mathbb R^N}u_j^2\log u_j^2\,\text{d}x .
\]
Since \(u_{n,j}\to u_j\) in \(H_V^1(\mathbb R^N)\), the quadratic part also
converges. Therefore
\[
I_{\lambda_n}(u_{n,j})
=
I(u_{n,j})
+
\frac{\lambda_n}{p}\int_{\mathbb R^N}|u_{n,j}|^p\,\text{d}x
\to
I(u_j).
\]
Consequently,
\[
I(u_j)
=
\lim_{n\to\infty}I_{\lambda_n}(u_{n,j})
\ge \alpha_j>0 .
\]
In particular, \(u_j\ne0\). Since \(\alpha_j\to+\infty\), we have
\[
I(u_j)\to+\infty
\quad\text{as }j\to\infty .
\]
Thus the solutions \(\{u_j\}_{j=1}^{\infty}\) are mutually distinct, and problem
\eqref{eq1.1} admits infinitely many weak solutions. The proof is complete.
\end{proof}

\section*{Acknowledgment}
C. Huang would like to express sincere gratitude to his supervisor Prof. Zhi-Qiang Wang for his valuable advice. C. Huang was supported by China Postdoctoral Science Foundation (2020M682065). Z. Yang is supported by National Natural Science Foundation of China (12301145,12561020,12261107) and Yunnan Fundamental Research Projects (202301AU070144, 202401AU070123).

\end{document}